\documentclass{mcom-l}
\usepackage{cleveref}
\usepackage{amsmath}
\usepackage{amsthm}
\usepackage{amssymb}
\usepackage{lipsum}
\usepackage{bm}
\usepackage{amsfonts}
\usepackage{graphicx}
\usepackage{epstopdf}
\usepackage{algorithmic}
\usepackage{xcolor}
\usepackage{float}

\newtheorem{theorem}{Theorem}[section]
\newtheorem{lemma}{Lemma}[section]
\newtheorem{proposition}[theorem]{Proposition}

\theoremstyle{definition}
\newtheorem{definition}[theorem]{Definition}
\newtheorem{example}[theorem]{Example}

\newtheorem{corollary}{Corollary}[section]

\newtheorem{remark}{Remark}[section]

\numberwithin{equation}{section}

\begin{document}

\title[QEO: Projection Method and Convergence Analysis]{Quasiperiodic Elliptic Operators: \\ Projection Method and Convergence Analysis}

\author{Kai Jiang}
\address{Hunan Key Laboratory for Computation and Simulation in Science and Engineering,
		Key Laboratory of Intelligent Computing and Information Processing of Ministry of Education, 
        School of Mathematics and Computational Science, Xiangtan University, Xiangtan, Hunan, China, 411105.}
\curraddr{}
\email{kaijiang@xtu.edu.cn}
\thanks{\textbf{Funding:} This work is supported by the National Key R\&D Program of China (2023YFA1008800). Kai Jiang is supported in part by the National Natural Science Foundation of China (12288101), the Science and Technology Innovation Program of Hunan Province (2024RC1052), the Innovative Research Group Project of Natural Science Foundation of Hunan Province of China (2024JJ1008). Qilong Zhai is supported in part by the National Natural Science Foundation of China (12271208).}

\author{Jiaqi Tang}
\address{School of Mathematics, Jilin University, Changchun, Jilin, China, 130012}
\curraddr{}
\email{tangjq23@mails.jlu.edu.cn}
\thanks{}

\author{Qilong Zhai}
\address{School of Mathematics, Jilin University, Changchun, Jilin, China, 130012}
\curraddr{}
\email{zhaiql@jlu.edu.cn}

\author{Qi Zhou}
\address{School of Mathematics and Computational Science, Xiangtan University, Xiangtan, Hunan, China, 411105.}
\curraddr{Institute for Math and AI, Wuhan University, Wuhan, Hubei, China, 430072.}
\email{qizhou@smail.xtu.edu.cn}
\thanks{}

\subjclass[2010]{65J10, 65T50, 78M10, 78M22}

\date{}

\dedicatory{}

\begin{abstract}
    Quasiperiodic elliptic operators (QEOs) serve as fundamental models in both mathematics and physics, as exemplified by their role in the numerical modeling of one-dimensional photonic quasicrystals. 
	However, distinct from periodic elliptic operators, approximating eigenpairs for QEOs poses significant challenges, particularly in capturing the full spectral structure (notably the continuous spectrum) and deriving convergence guarantees in the absence of compactness. 
	In this paper, we develop a high-accuracy numerical method to compute eigenpairs of QEOs based on the projection method, which embeds quasiperiodic operators into a higher-dimensional periodic torus. To address the non-compactness issue, we construct a directional-derivative Hilbert space along irrational manifolds of a high-dimensional torus and characterize operators equivalent to QEOs within this space. By integrating spectral theory for non-compact operators into the Babu\v{s}ka--Osborn eigenproblem framework, we establish rigorous convergence analysis and prove that our method achieves spectral accuracy. Numerical experiments validate the accuracy and efficiency of the proposed method, including a one-dimensional photonic quasicrystal and two- and three-dimensional QEOs.
\end{abstract}
\keywords{Quasiperiodic elliptic operators, Projection method, Non-compact operators, Convergence analysis, Photonic quasicrystals.}

\maketitle
\section{Introduction}
\subsection{Background}
In this paper, we consider the $d$-dimensional quasiperiodic elliptic operator (QEO)
\begin{equation*}
L := -\nabla \cdot \big( \alpha(\bm{x}) \nabla \big),\qquad \bm{x} \in \mathbb{R}^d,
\end{equation*}
where the coefficient $\alpha(\bm{x})$ is a quasiperiodic function (see Definition \ref{def:QP}). 
This operator is a central object of study, with substantial significance in both mathematical analysis and physical modeling. 

Mathematically, QEOs present a considerable leap beyond periodic elliptic operators (PEOs), as quasiperiodicity leads to distinct   {compactness properties and spectral structure}. 
PEOs possess compact resolvents on periodic function spaces, which ensures that their spectra consist solely of discrete eigenvalues \cite{bruning1992spectrum,kuchment2016overview}.
By contrast,  QEOs lack this compactness property because compact embedding theorems fail in Besicovitch quasiperiodic function spaces \cite{besicovitch1954almost}.
Consequently, QEO spectra may contain continuous components, in sharp contrast to the purely discrete spectra of their periodic counterparts.
This theoretical gap leaves QEOs without an established spectral-analysis framework, making  {the rigorous numerical computation and analysis of their eigenpairs} a  {highly nontrivial} problem.

 {
	Moreover, QEOs also serve as fundamental mathematical and physical models. In the one-dimensional setting, they underpin the numerical realization of photonic quasicrystals \cite{chan1998photonic,vardeny2013optics}.
}  
{Distinct from periodic structures, the inherent quasiperiodicity endows them with remarkable physical behaviors,} including more stable superconducting pairing, defect resistance, and dissipationless transmission \cite{levine1984,man2005experimental,vyunishev2017quasiperiodic}. Consequently, these attributes have in recent years driven the incorporation of quasiperiodic structures into advanced photonic devices, such as broadband optical filters, low-loss optical fibers, and high-efficiency lasers.

\subsection{Numerical progress and challenges}

The eigenproblem of QEOs poses substantial challenges for both numerical computation and theoretical analysis, owing to the non-compactness of  {these operators} and the non-decaying, space-filling nature of their  {eigenfunctions}.  
A traditional approach, the periodic approximation method (PAM), tackles such problems by approximating quasiperiodic systems with periodic ones~\cite{zhang2008efficient}. 
The accuracy of PAM is dominated by Diophantine errors arising from approximating irrational numbers by rational ones, which precludes uniform error decay~\cite{jiang2023}.
When applied to QEOs,  {PAM suffers from a fundamental flaw by approximating QEOs as PEOs. Since the spectra of PEOs consist solely of discrete eigenvalues, PAM can only compute discrete spectra and fails to capture any continuous-spectrum information}~\cite{che2021polarization,zhang2022unfolded}. 
 {Even as the approximation period is enlarged (i.e., the computational domain is expanded) and the discretization is refined, PAM merely refines existing discrete eigenvalues rather than inserting new spectral components into the gaps.} 
 {Consequently, PAM is inherently incapable of reflecting the spectral features of QEOs.}

Recently, the projection method (PM) has emerged as a highly accurate and efficient approach for quasiperiodic systems \cite{jiang2014}, treating each quasiperiodic system as an irrational manifold of a higher-dimensional periodic system.
Extensive studies have shown the accuracy and efficiency of PM for a broad range of quasiperiodic problems, including quasicrystals~\cite{jiang2015stability,si2025designing}, incommensurate quantum systems~\cite{jiang2024irrational}, quasiperiodic multiscale problems~\cite{jiang2024}, and grain boundaries~\cite{jiang2022tilt}. 
Theoretical analyses of PM for solving differential equations have also been progressively developed~\cite{jiang2024numerical,jiang2024}. 
 {When applied to QEOs, PM is expected to overcome the limitation of PAM, precisely because its high-dimensional embedding captures the global structure of quasiperiodicity. Rather than being restricted to preexisting discrete eigenvalues, PM can, under mesh refinement, insert new eigenvalues into spectral gaps and reveal the underlying continuous bands. As numerical accuracy increases, these gaps are progressively filled, suggesting a dense approximation of the overall spectral structure~\cite{rodriguez2008computation, quan2026spectral}.}
Despite these advantages, a mature theoretical framework for the convergence of PM on QEOs is still lacking. Establishing such rigorous convergence analysis requires addressing the fundamental difficulty posed by non-compactness. Unlike periodic elliptic operators, the absence of compactness in QEOs invalidates the operator compact approximation theory that underpins the Galerkin method~\cite{vainikko1969compact,krasnosel2012approximate,zhai2019}, making the theoretical analysis of PM for QEOs substantially more challenging. 

\subsection{Contribution}
In this work, we focus on the numerical computation of eigenpairs of QEOs and provide the corresponding convergence analysis. 
The main contributions are summarized as follows. 
 {
\begin{itemize}
    \setlength{\itemsep}{0.45em}
    \item We propose a PM-based algorithm for computing QEO eigenpairs by embedding quasiperiodic operators into a higher-dimensional periodic torus and solving the equivalent problem with Fourier discretization. Moreover, to address the computational challenges caused by the dimensionality lifting, the fast Fourier transform is incorporated to improve computational efficiency. 
    \item We provide the theoretical analysis for the proposed algorithm by constructing a directional-derivative Hilbert space along irrational manifolds of the high-dimensional torus. Building on this formulation, we integrate spectral theory for non-compact operators~\cite{descloux19782,descloux1978} into the Babu\v{s}ka--Osborn eigenproblem framework~\cite{babuvska1991}, thereby resolving the non-compactness obstacle and establishing spectral-accuracy convergence for both eigenvalues and eigenfunctions.
    \item  We adopt an efficient diagonal preconditioner to mitigate ill-conditioning in the discretized systems and systematically validate the proposed method through experiments, including one-dimensional photonic quasicrystals and two- and three-dimensional cases. Numerical results show that our method has spectral convergence and higher accuracy compared to PAM.
\end{itemize}
}

\subsection{Organization}
The rest of this paper is organized as follows. In \Cref{sec:pre}, we present preliminaries on quasiperiodic function spaces. In \Cref{sec:eig}, we propose a method for QEOs and provide a rigorous convergence and error analysis. In \Cref{sec:exp}, we show accuracy and efficiency through numerical examples of a one-dimensional photonic quasicrystal and two- and three-dimensional QEOs. Finally, \Cref{sec:con} concludes  {this work} and outlines the future work.

\subsection{Notations}
We collect the main notations used throughout the paper. For a vector $\bm{\alpha}=(\alpha_i)^n_{i=1}\in \mathbb{R}^n$, 
$|\bm{\alpha}|:=\alpha_1+\cdots+\alpha_n$. For two numbers $a$ and $b$, we write $a \lesssim b$ ($a \gtrsim b$) if there exists a constant $C>0$ such that $a \le C b$ ($a \ge C b$). For matrices $\bm{A}=(a_{ij}), \bm{B}=(b_{ij}) \in \mathbb{C}^{n \times n}$, their Hadamard product is denoted by $\bm{A} \circ \bm{B}$, with $(\bm{A} \circ \bm{B})_{ij} = a_{ij}b_{ij}$. The Frobenius norm of $\bm{A}=(a_{ij}) \in \mathbb{C}^{n \times n}$ is $\|\bm{A}\|_F := \bigl( \sum_{i=1}^{n}\sum_{j=1}^{n} |a_{ij}|^2 \bigr)^{1/2}$. $\{\bm{e}_i\}_{i=1}^{n}$ is the canonical basis of $\mathbb{R}^n$.

\section{Preliminaries}
\label{sec:pre}

	In this section, we provide a brief review of quasiperiodic functions and corresponding function spaces. 
   	
\subsection{Quasiperiodic functions}
In this subsection, we present the definition of quasiperiodic functions and establish their relationship with the corresponding parent periodic functions.
	
	\begin{definition}
	A matrix $\bm{P}$ is called the projection matrix if it belongs to the set $\mathbb{P}^{d\times n} :=\{\bm{P}=(\bm{p}_1,\cdots,\bm{p}_n)\in\mathbb{R}^{d\times n}:
    \bm{p}_1,\cdots,\bm{p}_n~\mbox{are}~\mathbb{Q}\mbox{-linearly}~\mbox{independent},\\ {\rm rank}_{\mathbb{R}}(\bm{P})=d\}.$
	\end{definition}
	\begin{definition}\label{def:QP}
		A $d$-dimensional function $f(\bm{x})$ is quasiperiodic if there exists a continuous $n$-dimensional periodic function $F(\bm{y})$ and a projection matrix $\bm{P} \in \mathbb{P}^{d \times n}$ such that $f(\bm{x}) = F(\bm{P}^{T}\bm{x})$ for all $\bm{x} \in \mathbb{R}^{d}$. Here, $F(\bm{y})$ is called the parent function of $f(\bm{x})$. 
	\end{definition}
	\noindent
	In particular, when $n=d$, $f(\bm{x})$ is periodic; when $n\to\infty$, $f(\bm{x})$ is almost periodic~\cite{bohr2018almost,besicovitch1954almost}. The set of all quasiperiodic functions is denoted by ${\rm QP}(\mathbb{R}^{d})$. 

    For a quasiperiodic function $f\in {\rm QP}(\mathbb{R}^d)$, its mean value $\mathcal{M}\{f\}$ is defined as 
	$$\mathcal{M} \{ f \} := \lim\limits_{T \to \infty} \frac{1}{(2T)^{d}}\int_{\bm{s}+[-T,T]^d}f(\bm{x}){d}\bm{x},\qquad \forall\,\bm{s}\in\mathbb{R}^d.$$
	Since quasiperiodic functions are defined on the entire space $\mathbb{R}^d$, the Fourier-Bohr series of $f(\bm{x})$ is defined by
	$$f(\bm{x}) = \sum_{\bm{\lambda_{k}} \in \Lambda}\hat{f}_{\bm{\lambda_{k}}}e^{i\bm{\lambda_{k}}^{T}\bm{x}},\qquad \hat{f}_{\bm{\lambda_{k}}} = \mathcal{M}\big\{f(\bm{x})e^{-i \bm{\lambda}_{\bm k}^{T} \bm{x}}\big\},$$
	where $\Lambda = \{\bm{\lambda}:\bm{\lambda} = \bm{Pk},~\bm{k} \in \mathbb{Z}^n\}$ are Fourier exponents. Fourier bases satisfy the following orthogonality 
	\begin{equation}\label{eqn-05221}
\mathcal{M}\{e^{i\bm{\lambda}_{1}^{T}\bm{x}}e^{-i\bm{\lambda}_{2}^{T}\bm{x}}\} = 
	\begin{cases}
		1,& \bm{\lambda}_{1} = \bm{\lambda}_{2},\\
		0,& {\rm otherwise}.
	\end{cases}
	\end{equation}
	
	Let $\mathbb{T}^{n} = (\mathbb{R}/ 2\pi \mathbb{Z})^{n}$ be the $n$-dimensional torus. The Fourier series of the $n$-dimensional periodic function $F(\bm{y})$ defined on $\mathbb{T}^{n}$ is
	$$F(\bm{y}) = \sum_{\bm{k} \in \mathbb{Z}^{n}}\hat{F}_{\bm{k}}e^{i\bm{k}^{T}\bm{y}},\qquad\hat{F}_{\bm{k}} = \frac{1}{\left| \mathbb{T}^{n} \right|}\int_{\mathbb{T}^{n}}F(\bm{y})e^{-i\bm{k}^{T}\bm{y}}{d}\bm{y},~ \bm{k} \in \mathbb{Z}^{n}.$$ 
    There is a close connection between quasiperiodic functions and their parent functions, which can be used to simplify the analysis of quasiperiodic systems~\cite{fan2025representation}. We present the following theorem, which essentially relies on the ergodicity induced by irrationality. 
	\begin{lemma}[\cite{jiang2024}, Theorem 2.3]\label{thm-1}
		For a given continuous quasiperiodic function
		$f(\bm{x}) = F(\bm{P}^{T}\bm{x}),~\bm{x} \in \mathbb{R}^{d},$
		where $F(\bm{y})\in\mathcal{C}(\mathbb{T}^n)$ is the  parent function and $\bm{P}$ is the projection matrix, we have
		$\hat{f}_{\bm{\lambda_{k}}} = \hat{F}_{\bm{k}},~\forall\,\bm{k}\in\mathbb{Z}^n.$
	\end{lemma}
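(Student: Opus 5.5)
The plan is to reduce the identity to the case of trigonometric polynomials, where it follows from the orthogonality relation \eqref{eqn-05221}, and then pass to general continuous parent functions $F\in\mathcal{C}(\mathbb{T}^n)$ by a density argument.

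\textbf{Step 1: trigonometric polynomials.} Let $F(\bm{y})=\sum_{\bm{j}\in J}\hat F_{\bm{j}}e^{i\bm{j}^T\bm{y}}$ with $J\subset\mathbb{Z}^n$ finite. Then
\begin{equation*}
f(\bm{x})=F(\bm{P}^T\bm{x})=\sum_{\bm{j}\in J}\hat F_{\bm{j}}e^{i(\bm{P}\bm{j})^T\bm{x}}.
\end{equation*}
By linearity of the mean value,
\begin{equation*}
\hat f_{\bm{\lambda_k}}=\sum_{\bm{j}\in J}\hat F_{\bm{j}}\,\mathcal{M}\{e^{i(\bm{P}\bm{j})^T\bm{x}}e^{-i(\bm{P}\bm{k})^T\bm{x}}\}.
\end{equation*}
By \eqref{eqn-05221} each summand is nonzero only when $\bm{P}\bm{j}=\bm{P}\bm{k}$. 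The columns $\bm{p}_1,\dots,\bm{p}_n$ are $\mathbb{Q}$-linearly independent, so $\bm{P}(\bm{j}-\bm{k})=0$ with $\bm{j}-\bm{k}\in\mathbb{Z}^n$ forces $\bm{j}=\bm{k}$. This injectivity of $\bm{k}\mapsto\bm{P}\bm{k}$ on $\mathbb{Z}^n$ is the one place the projection-matrix hypothesis is used. Hence $\hat f_{\bm{\lambda_k}}=\hat F_{\bm{k}}$ in this case. If \eqref{eqn-05221} must be checked directly, it reduces to $\mathcal{M}\{e^{i\bm{\mu}^T\bm{x}}\}=0$ for $\bm{\mu}\neq 0$. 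This follows by factoring the cube integral coordinatewise: any coordinate with $\mu_l\neq0$ contributes a factor bounded by $2/(|\mu_l|\,2T)\to0$.

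\textbf{Step 2: density.} For general continuous $F$, take Fejér means $F_N$, which are trigonometric polynomials with $\|F-F_N\|_{L^\infty(\mathbb{T}^n)}\to0$. Put $f_N(\bm{x})=F_N(\bm{P}^T\bm{x})$; then $\sup_{\bm{x}}|f-f_N|\le\|F-F_N\|_\infty$. Since the mean value is a linear functional with $|\mathcal{M}\{g\}|\le\sup|g|$, we get
\begin{equation*}
|\hat f_{\bm{\lambda_k}}-(\hat f_N)_{\bm{\lambda_k}}|\le\|F-F_N\|_\infty.
\end{equation*}
Similarly $|\hat F_{\bm{k}}-(\hat F_N)_{\bm{k}}|\le\|F-F_N\|_\infty$. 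Combining these two bounds with Step 1 and letting $N\to\infty$ gives the claim.

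\textbf{Main obstacle.} The step needing care is that $\mathcal{M}\{f\}$ actually exists for general continuous $F$, since the definition presupposes the limit. I would deduce existence from the same approximation. The averages of $f_N$ over $\bm{s}+[-T,T]^d$ converge, uniformly in $\bm{s}$, to $(\hat F_N)_{\bm 0}$ by Step 1. The averages of $f$ differ from those of $f_N$ by at most $\|F-F_N\|_\infty$, uniformly in $T$. A standard $\varepsilon/3$ argument then shows the averages of $f\,e^{-i\bm{\lambda_k}^T\bm{x}}$ form a Cauchy family as $T\to\infty$, so the limit exists. The same argument shows the limit is independent of $\bm{s}$.
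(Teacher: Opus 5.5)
Your proof is correct, but it does not follow the route the paper points to. The paper gives no argument of its own: it cites \cite{jiang2024} and says the result ``essentially relies on the ergodicity induced by irrationality.'' On that route one views $\bm{y}\mapsto\bm{y}+\bm{P}^T\bm{x}$ as an ergodic $\mathbb{R}^d$-action on $\mathbb{T}^n$, uses an ergodic theorem to equate the mean value of $G(\bm{P}^T\bm{x})$ with $\frac{1}{|\mathbb{T}^n|}\int_{\mathbb{T}^n}G\,{\rm d}\bm{y}$, and takes $G(\bm{y})=F(\bm{y})e^{-i\bm{k}^T\bm{y}}$. You instead prove the needed equidistribution by hand, in Weyl-criterion style. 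You treat exponentials first, where $\mathbb{Q}$-independence enters exactly as $\bm{P}\bm{k}\neq\bm{0}$ for $\bm{k}\neq\bm{0}$, and then pass to general $F$ by uniform approximation with Fej\'er means.

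Your version is self-contained and fits the definition of $\mathcal{M}$ better. That definition needs the limit for \emph{every} shift $\bm{s}$, while a Birkhoff-type theorem only gives almost every base point, and the orbit $\{\bm{P}^T\bm{x}\}$ is null in $\mathbb{T}^n$ when $d<n$. Closing that gap requires unique ergodicity plus continuity of $F$, which is essentially your density argument; you also get existence of the mean and uniformity in $\bm{s}$ along the way. The ergodic viewpoint is shorter and explains conceptually why irrationality matters. It also extends almost everywhere to merely integrable parent functions, where uniform approximation is unavailable. Under the lemma's continuity hypothesis, nothing is lost by your approach.

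One tidy-up. In your last paragraph, the approximants of $f e^{-i\bm{\lambda}_{\bm{k}}^T\bm{x}}$ are $f_N e^{-i\bm{\lambda}_{\bm{k}}^T\bm{x}}$, whose averages tend to $(\hat F_N)_{\bm{k}}$, not $(\hat F_N)_{\bm{0}}$. A simpler fix: let $G_N=F_Ne^{-i\bm{k}^T\bm{y}}$, $g=G(\bm{P}^T\cdot)$, $g_N=G_N(\bm{P}^T\cdot)$, and let $A_{T,\bm{s}}$ denote the average over $\bm{s}+[-T,T]^d$. The single bound $|A_{T,\bm{s}}(g)-\hat G_{\bm{0}}|\le 2\|G-G_N\|_\infty+|A_{T,\bm{s}}(g_N)-(\hat G_N)_{\bm{0}}|$ then gives existence of the mean, independence of $\bm{s}$, and $\hat f_{\bm{\lambda_k}}=\hat G_{\bm{0}}=\hat F_{\bm{k}}$ all at once. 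This makes Step 2 redundant.
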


    \subsection{Besicovitch quasiperiodic function spaces}

    The Hilbert space defined on $\mathbb{T}^{n}$ is
	$$\mathcal{L}^{2}(\mathbb{T}^{n}) = \big\{ F(\bm{y}): (F,F) <+\infty \big\},~ (F_{1},F_{2}) = \frac{1}{\left| \mathbb{T}^{n} \right|}\int_{\mathbb{T}^{n}}F_{1}\bar{F}_{2}{\rm d}\bm{y}.$$
	For any $s \in \mathbb{N}^{+}$, the $s$-derivative Sobolev space is
	$$\mathcal{H}^{s}(\mathbb{T}^{n}) = \big\{ F \in \mathcal{L}^{2}(\mathbb{T}^{n}):\| F\|_{\mathcal{H}^{s}(\mathbb{T}^{n})} < \infty \big\}, $$
	equipped with the norm
	$$\|F\|_{\mathcal{H}^{s}(\mathbb{T}^{n})} = \left( \sum_{\bm{k} \in \mathbb{Z}^{n}}\left(1+\left|\bm{k}\right|^{2}\right)^{s}\left|\hat{F}_{\bm{k}}\right|^{2}\right)^{1/2}.$$
    
    We now introduce the Besicovitch quasiperiodic function spaces. The inner product space on ${\rm QP}(\mathbb{R}^{d})$ is defined as
	$$\mathcal{L}^{2}_{\rm QP}(\mathbb{R}^{d})=\left\{f \in {\rm QP}(\mathbb{R}^{d}):\mathcal{M}\{\left|f\right|^{2}\} < \infty \right\},$$
	which is equipped with the inner product and the norm
	$$\left(f_{1},f_{2}\right)_{\mathcal{L}^{2}_{\rm QP}(\mathbb{R}^{d})}=\mathcal{M}\{f_{1}(x) \overline{f_{2}(x)}\}, \quad \|f\|_{\mathcal{L}^{2}_{\rm QP}(\mathbb{R}^{d})}=\left( f,f\right) ^{1/2}_{\mathcal{L}^{2}_{\rm QP}(\mathbb{R}^{d})}.$$
	Similarly, we define
	$$\mathcal{L}^{\infty}_{\rm QP}(\mathbb{R}^{d}) = \left\{f \in {\rm QP}(\mathbb{R}^{d}): \sup_{\bm{x}\in \mathbb{R}^{d}}\left|f(\bm{x})\right| < \infty \right\}.$$
	For any integer $s \geq 0$, the $s$-derivative Sobolev space $\mathcal{H}^{s}_{\rm QP}(\mathbb{R}^{d})$ is defined through the following norm
	$$\|f\|^2_{\mathcal{H}^{s}_{\rm QP}(\mathbb{R}^{d})}=\sum_{\left|\bm{p}\right| \leq s}\left(\partial^{\bm{p}}_{\bm{x}}f_{1},\partial^{\bm p}_{\bm{x}}f_{2}\right)_{\mathcal{L}^{2}_{\rm QP}(\mathbb{R}^{d})},$$
    and semi-norm 
	$$\left|f\right|^{2}_{\mathcal{H}^{s}_{\rm QP}(\mathbb{R}^{d})} = \sum_{\left|{\bm p}\right|=s}\left(\partial^{\bm p}_{\bm{x}}f,\partial^{\bm p}_{\bm{x}}f\right)_{\mathcal{L}^{2}_{\rm QP}(\mathbb{R}^{d})}.$$
	Moreover, Parseval's identity in the $\mathcal{L}^{2}$-norm sense is given by
	$$\|f\|_{\mathcal{L}^{2}_{\rm QP}(\mathbb{R}^{d})} = \left(\sum_{\bm{\lambda}_{\bm k} \in \Lambda}\left|\hat{f}_{\bm{\lambda}_{\bm k}}\right|^{2}\right)^{1/2},$$
	along with the corresponding $\mathcal{H}^{s}_{\rm QP}(\mathbb{R}^{d})$ norm and semi-norm
	$$\|f\|_{\mathcal{H}^{s}_{\rm QP}(\mathbb{R}^{d})} = \left(\sum_{\bm{\lambda}_{\bm k} \in \Lambda}\left(1+\left|\bm{\lambda}_{\bm k}\right|^{2}\right)^{s}\left|\hat{f}_{\bm{\lambda}_{\bm k}}\right|^{2}\right)^{1/2},$$
    $$|f|_{\mathcal{H}^{s}_{\rm QP}(\mathbb{R}^{d})} = \left(\sum_{\bm{\lambda}_{\bm k} \in \Lambda}\left|\bm{\lambda}_{\bm k}\right|^{2s}\left|\hat{f}_{\bm{\lambda}_{\bm k}}\right|^{2}\right)^{1/2}.$$

    \begin{remark}\label{rem-1}
    The classical Besicovitch almost periodic function space forms a Hilbert space, whose frequency spectrum is generated by countably many $\mathbb{Q}$-linearly independent vectors~\cite{besicovitch1954almost}. 
    In numerical calculations, we can only consider quasiperiodic functions with a finitely generated Fourier--Bohr spectrum (\textit{i.e.}, the projection matrix has finitely many columns), which leads to incompleteness.
    Consequently, the Besicovitch quasiperiodic function space used here is not a Hilbert space. 
    \end{remark}

\section{Numerical method and theoretical framework for QEOs}
\label{sec:eig}
In this section, we present the numerical scheme based on the PM for QEOs. Moreover, we establish a complete theoretical framework for the convergence analysis and error estimates of the eigenvalues and corresponding eigenfunctions.

	\subsection{The eigenproblem of QEOs}
	To ensure the well-posedness of the solution, we consider the following eigenproblem of QEOs. 

Find eigenpair $(\gamma,u)$, $\|u\|_{\mathcal{L}^{2}_{\rm QP}(\mathbb{R}^{d})}=1$, such that
	\begin{equation}\label{eqn-4}
			(L+I)u(\bm{x})=-\nabla \cdot (\alpha(\bm{x})\nabla u(\bm{x}))+u(\bm{x})=\gamma u(\bm{x}), \qquad\bm{x} \in \mathbb{R}^{d}.
	\end{equation} 
    Denote $L_1:=-\nabla \cdot \big( \alpha(\bm{x}) \nabla \big)+I$. It is evident that $L_1$ and $L$ commute and thus share the same eigenfunctions. The eigenvalues of $L_1$ are those of the original QEO shifted by one.
	The coefficient $\alpha(\bm{x})\in \mathcal{L}^{\infty}_{\rm QP}(\mathbb{R}^{d})$ is uniformly elliptic and bounded, \textit{i.e.}, for all $\bm{x},\bm{\xi} \in \mathbb{R}^{d}$, there are constants $C_{0},C_{1}>0$ such that
$$
C_{0}\left| \bm{\xi} \right|^{2} \leq \bm{\xi}^{T}\alpha(\bm{x})\bm{\xi} \leq C_{1}\left| \bm{\xi} \right|^{2}.
$$

Further, the variational form of \eqref{eqn-4} is to find $\gamma \in \mathbb{R}$ and $u \in \mathcal{H}^{1}_{\rm QP}(\mathbb{R}^{d})$ such that $\|u\|_{\mathcal{H}^{1}_{\rm QP}(\mathbb{R}^{d})}=1$ and 
	\begin{equation}\label{eq:QP_varia}
		(\alpha \nabla u,\nabla v)+(u,v)=\gamma(u,v),\qquad \forall\,v \in \mathcal{H}^{1}_{\rm QP}(\mathbb{R}^{d}).
	\end{equation}
As noted in \Cref{rem-1}, $\mathcal{H}^{1}_{\rm QP}(\mathbb{R}^{d})$ is not a Hilbert space. We introduce a new inner product space on $\mathbb{T}^{n}$ tailored to the spectral properties of quasiperiodic functions
$$\mathcal{H}^{1}_{\bm{P}}(\mathbb{T}^{n})=\left\{U \in \mathcal{L}^{2}(\mathbb{T}^{n}),~ \bm{P}\nabla U \in [\mathcal{L}^{2}(\mathbb{T}^{n})]^{d}\right\},$$
which is equipped with the inner product and the norm
\begin{equation}\label{eqn-07302}
(U,V)_{\mathcal{H}_{\bm{P}}^{1}(\mathbb{T}^{n})} = (U,V)+(\bm{P}\nabla U,\bm{P}\nabla V),~ \|U\|_{\mathcal{H}_{\bm{P}}^{1}(\mathbb{T}^{n})} = (U,U)_{\mathcal{H}_{\bm{P}}^{1}(\mathbb{T}^{n})}^{1/2}.
\end{equation}
	\begin{proposition}\label{thm-com}
		$\mathcal{H}^{1}_{\bm{P}}(\mathbb{T}^{n})$ is a Hilbert space.
	\end{proposition}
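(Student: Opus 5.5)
The inner product \eqref{eqn-07302} is clearly sesquilinear, Hermitian and positive definite, because $\|U\|_{\mathcal{H}^1_{\bm P}(\mathbb{T}^n)}\ge\|U\|$. So the only real content of the statement is completeness. I will prove it with the standard Sobolev-space argument: interpret $\bm{P}\nabla U$ distributionally and use that $\bm{P}\nabla$ is a closed operator. Concretely, for $U\in\mathcal{L}^2(\mathbb{T}^n)$ I take $\bm{P}\nabla U$ to be the vector of distributions $\sum_{j}p_{ij}\partial_{y_j}U$, $i=1,\dots,d$. Equivalently, in Fourier variables it is the sequence $\big(i\bm{P}\bm{k}\,\hat U_{\bm k}\big)_{\bm k\in\mathbb{Z}^n}$. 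The membership condition then says that $\sum_{\bm k}|\bm{P}\bm{k}|^2|\hat U_{\bm k}|^2<\infty$.

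This Fourier picture gives the cleanest route. By Parseval on $\mathcal{L}^2(\mathbb{T}^n)$,
\[
\|U\|^2_{\mathcal{H}^1_{\bm P}(\mathbb{T}^n)}=\sum_{\bm k\in\mathbb{Z}^n}\big(1+|\bm{P}\bm{k}|^2\big)|\hat U_{\bm k}|^2 .
\]
Hence the map $U\mapsto(\hat U_{\bm k})_{\bm k}$ is an isometric isomorphism onto the weighted sequence space $\ell^2_w(\mathbb{Z}^n)$ with weight $w_{\bm k}=1+|\bm{P}\bm{k}|^2\ge 1$. Checking this requires two things. First, the map is isometric, which is the display above. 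Second, it is surjective: any $(c_{\bm k})\in\ell^2_w$ lies in $\ell^2$ since $w_{\bm k}\ge1$. It therefore defines some $U\in\mathcal{L}^2(\mathbb{T}^n)$ by the Riesz--Fischer theorem, and that $U$ satisfies $\bm{P}\nabla U\in[\mathcal{L}^2]^d$ precisely because $\sum w_{\bm k}|c_{\bm k}|^2<\infty$. Completeness of $\ell^2_w$ is standard, since it is an $\mathcal{L}^2$ space over $\mathbb{Z}^n$ with counting measure weighted by $w$. So $\mathcal{H}^1_{\bm P}(\mathbb{T}^n)$ is complete.

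As a cross-check, I could also give a direct Cauchy-sequence proof. Take $(U_m)$ Cauchy in the $\mathcal{H}^1_{\bm P}$ norm. Then $U_m\to U$ in $\mathcal{L}^2$ and $\bm{P}\nabla U_m\to \bm{G}$ in $[\mathcal{L}^2]^d$. For every trigonometric polynomial (or $C^\infty$ periodic) test function $\phi$ we have
\[
(\bm{P}\nabla U_m,\phi)=-(U_m,\bm{P}\nabla\phi).
\]
Passing to the limit identifies $\bm{G}=\bm{P}\nabla U$ in the weak sense, so $U\in\mathcal{H}^1_{\bm P}$ and $U_m\to U$ in that norm.

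The only delicate point is the precise meaning of $\bm{P}\nabla U$ for a merely $\mathcal{L}^2$ function, which the paper's definition leaves implicit. Once it is fixed as the distributional (equivalently Fourier-multiplier) directional derivative, both arguments go through without difficulty. It is worth stressing that the $\mathbb{Q}$-linear independence of the columns of $\bm{P}$ is not needed for completeness. That condition only affects the fact that the weight $|\bm{P}\bm{k}|^2$ does not grow like $|\bm{k}|^2$, so the space is strictly larger than $\mathcal{H}^1(\mathbb{T}^n)$ and the embedding into $\mathcal{L}^2$ is not compact.
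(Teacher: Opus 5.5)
Your proof is correct, but your main argument is not the one the paper uses. The paper's proof is essentially your ``cross-check.'' It takes a Cauchy sequence, obtains $\mathcal{L}^2$ limits $U$ of $U_m$ and $W$ of $\bm{P}\nabla U_m$, and passes to the limit in $(\bm{P}\nabla U_m,\Phi)=-(U_m,\bm{P}\nabla\Phi)$ to identify $W$ as the weak derivative $\bm{P}\nabla U$. That closedness argument is the generic Sobolev one and uses no Fourier structure, so it would carry over to other domains or to variable-coefficient first-order operators. As written, though, the paper's final equality $-(U,\bm{P}\nabla\Phi)=(\bm{P}\nabla U,\Phi)$ already presupposes $\bm{P}\nabla U$. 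Your version fixes the distributional meaning of $\bm{P}\nabla U$ in advance and tests against trigonometric polynomials, so it states the identification directly.

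Your primary route identifies $\mathcal{H}^1_{\bm P}(\mathbb{T}^n)$ unitarily with $\ell^2_w(\mathbb{Z}^n)$, $w_{\bm k}=1+|\bm{P}\bm{k}|^2$, and this gives more than completeness:
\begin{itemize}
\item it shows that the norm \eqref{eqn-07302} coincides with the Fourier-weighted norm $\|\cdot\|_1$ the paper relies on from Lemma~\ref{lem-3} onward;
\item it extends verbatim to the spaces $\mathcal{H}^{\alpha}_{\bm P}(\mathbb{T}^n)$ introduced right after the proposition;
\item it makes the non-compactness of $\mathcal{H}^1_{\bm P}(\mathbb{T}^n)\hookrightarrow\mathcal{L}^2(\mathbb{T}^n)$ transparent, since this inclusion is the diagonal map $\ell^2_w\to\ell^2$, which is compact iff $w_{\bm k}\to\infty$.
\end{itemize}

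One correction to your closing aside. The failure of $|\bm{P}\bm{k}|\gtrsim|\bm{k}|$, and of $w_{\bm k}\to\infty$, comes from ${\rm rank}\,\bm{P}=d<n$: there are lattice points within bounded distance of the real kernel of $\bm{P}$. It does not come from $\mathbb{Q}$-independence. That condition instead makes $\bm{k}\mapsto\bm{P}\bm{k}$ injective on $\mathbb{Z}^n$, which is what lets $\mathcal{H}^1_{\bm P}(\mathbb{T}^n)$ represent $\mathcal{H}^1_{\rm QP}(\mathbb{R}^d)$ faithfully. As you say, it plays no role in completeness.
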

     \begin{proof}
        Let $\{U_{m}\}$ be a Cauchy sequence in $\mathcal{H}_{\bm{P}}^{1}(\mathbb{T}^{n})$. By definition of the $\mathcal{H}_{\bm{P}}^{1}(\mathbb{T}^{n})$ norm, both $\{U_{m}\}$ and $\{\bm{P} \nabla U_{m}\}$ are Cauchy sequences. Since $\mathcal{L}^{2}(\mathbb{T}^{n})$ is a Hilbert space, there exist limits
$$U_{m} \to U~ in~ \mathcal{L}^{2}(\mathbb{T}^{n}),$$
$$\bm{P} \nabla U_{m} \to W~ in~ [\mathcal{L}^{2}(\mathbb{T}^{n})]^{d}.$$
Fix a test function $\Phi\in \mathcal{H}^{1}_{\bm{P}}(\mathbb{T}^{n})$, and by the definition of weak derivatives, 
$$(W,\Phi) = \lim\limits_{m \to \infty}(\bm{P}\nabla U_{m},\Phi) = \lim\limits_{m \to \infty}-(U_{m},\bm{P} \nabla \Phi)=-(U,\bm{P}\nabla \Phi)=(\bm{P}\nabla U,\Phi).$$
Thus, $W$ is the weak derivative $\bm{P}\nabla U$ and $U \in \mathcal{H}^{1}_{\bm{P}}(\mathbb{T}^{n})$. Therefore,
$$U_{m} \to U~ in~ \mathcal{H}_{\bm{P}}^{1}(\mathbb{T}^{n}).$$
\end{proof}
 
Then we can define the $\mathcal{H}^{\alpha}_{\bm{P}}(\mathbb{T}^{n})$ norm
	$$\|F\|_{\mathcal{H}^{\alpha}_{\bm{P}}(\mathbb{T}^{n})} = \sum_{\bm{k} \in \mathbb{Z}^{n}}\left(1+\left|\bm{P}\bm{k}\right|^{2}\right)^{\alpha}|\hat{F}_{\bm{k}}|^{2}.$$
	Correspondingly, we define the dual spaces $\mathcal{H}_{\bm{P}}^{-\alpha}(\mathbb{T}^{n}) = (\mathcal{H}_{\bm{P}}^{\alpha}(\mathbb{T}^{n}))^{'}$ with the norm
	$$\|F\|_{-\alpha} = \sup_{G \in \mathcal{H}_{\bm{P}}^{\alpha}(\mathbb{T}^{n})}\frac{(G,F)}{\|F\|_{\alpha}}.$$
To simplify notation, we use $\| \cdot \|_{\alpha}$ to denote the norm $\| \cdot \|_{\mathcal{H}^{\alpha}_{\bm{P}}(\mathbb{T}^{n})}$. 

By \Cref{thm-1} and the orthogonality of Fourier bases, we can derive the modal equation of (\ref{eq:QP_varia}) as follows
$$\sum_{\bm{k}_{\mathcal{V}} \in \mathbb{Z}^{n}}\sum_{\bm{k}_{\mathcal{U}} \in \mathbb{Z}^{n}}\hat{\alpha}_{\bm{Pk}_{\mathcal{A}}}(\bm{Pk}_{\mathcal{V}})^{T}(\bm{Pk}_{\mathcal{U}})\hat{u}_{\bm{Pk}_{\mathcal{U}}}\hat{v}_{\bm{Pk}_{\mathcal{V}}}+\sum_{\bm{k}_{\mathcal{V}} \in \mathbb{Z}^{n}}\hat{u}_{\bm{Pk}_{\mathcal{V}}}\hat{v}_{\bm{Pk}_{\mathcal{V}}} = \sum_{\bm{k}_{\mathcal{V}} \in \mathbb{Z}^{n}} \gamma \hat{u}_{\bm{Pk}_{\mathcal{V}}}\hat{v}_{\bm{Pk}_{\mathcal{V}}},$$ $$\bm{k}_{\mathcal{A}} = \bm{k}_{\mathcal{V}}-\bm{k}_{\mathcal{U}}.$$
The above expression coincides with the formulation of the high-dimensional periodic system below.

Find $\gamma \in \mathbb{R},~ \mathcal{U} \in \mathcal{H}^{1}_{\bm{P}}(\mathbb{T}^{n})$, such that $\|\mathcal{U}\|_{1}=1,$ and 
\begin{equation}\label{eqn-5}
	a(\mathcal{U},\mathcal{V})=\gamma(\mathcal{U},\mathcal{V}),\qquad \forall\,\mathcal{V} \in \mathcal{H}^{1}_{\bm{P}}(\mathbb{T}^{n}),
\end{equation}
where $\mathcal{A},\mathcal{U},\mathcal{V}$ are the parent functions of $\alpha,u,v$, respectively, and
$$a(\mathcal{U},\mathcal{V})=(\mathcal{A} \bm{P} \nabla \mathcal{U},\bm{P}\nabla \mathcal{V})+(\mathcal{U},\mathcal{V}).$$

The source problem associated with \eqref{eqn-5} is as follows: Given $\mathcal{F}\in\mathcal{L}^2(\mathbb{T}^n)$, find $\mathcal{W}\in\mathcal{H}^{1}_{\bm{P}}(\mathbb{T}^{n})$ satisfying
\begin{equation}\label{eq:source}
	a(\mathcal{W},\mathcal{V})=(\mathcal{F},\mathcal{V}),\qquad \forall\,\mathcal{V}\in\mathcal{H}^{1}_{\bm{P}}(\mathbb{T}^{n}).
\end{equation}

\subsection{Discretize QEOs by PM}
    Now, we discretize the equivalence problem \eqref{eqn-5} by PM. 
	For any $N \in \mathbb{N}^{+}$ and a given projection matrix $\bm{P} \in \mathbb{P}^{d \times n}$, denote
	$$K_{N}^{n} = \left\{ \bm{k} = (k_{j})_{j=1}^{n} \in \mathbb{Z}^{n}:-N/2 \leq k_{j} < N/2,~j = 1,\cdots,n\right\}.$$
	For periodic systems, the spectral collocation method provides an efficient way to determine Fourier coefficients. To simplify our discussion, we will consider a fundamental domain $[0,2 \pi)^{n}$ and assume that the discretization is uniform along each dimension. Namely, we discretize the torus $\mathbb{T}^{n}$ as
	$$\mathbb{T}_{N}^{n} = \left\{\bm{y_{j}}=(2\pi j_{1}/{N},\cdots,2\pi j_{N}/{N}) \in \mathbb{T}^{n}:0\leq j_1,\cdots,j_n\leq N\right\}.$$
	We denote the grid function space as
	$$\mathcal{G}_{N} = \{F: \mathbb{T}_{N}^{n} \to \mathbb{C}:~ F ~ is~ \mathbb{T}_{N}^{n}-{\rm periodic}\}.$$
	For any $\mathbb{T}_{N}^{n}$-periodic grid functions $G$, $F$, the $\ell_{2}$-inner product is defined as
\begin{equation}\label{eqn-07291}
(G,F)_{N} = \frac{1}{N^{n}}\sum_{\bm{y_{j}}\in \mathbb{T}_{N}^{n}}G(\bm{y_{j}})\bar{F}(\bm{y_{j}}).
\end{equation}
	For any $\bm{k},~\bm{l}\in\mathbb{Z}^n$, a simple calculation yields
\begin{equation}\label{eqn-05222}
	(e^{i\bm{k}^{T}\bm{y}},e^{i\bm{l}^{T} \bm{y}})_{N}=
    \begin{cases}
        1,& \bm{k} = \bm{l}+N\bm{m},~ \bm{m} \in \mathbb{Z}^{n},\\
        0,& {\rm otherwise}.
    \end{cases}
\end{equation}

We then give the discrete form of \eqref{eqn-5} using PM. Find $\gamma_{N} \in \mathbb{R}$ and $\mathcal{U}_{N} \in V^{N}={\rm span}\{e^{i\bm{k}^T\bm{y}}:\bm{k} \in K^{n}_{N},\bm{y} \in \mathbb{T}^{n}\}$ such that 
	\begin{equation}\label{eqn-6}
		a_{p}(\mathcal{U}_{N},\mathcal{V}_{N})=\gamma_{N}(\mathcal{U}_{N},\mathcal{V}_{N})_{N},\qquad\forall\,\mathcal{V}_{N} \in V^{N},
	\end{equation}
	where
	$$a_{p}(\mathcal{U}_{N},\mathcal{V}_{N})=(\mathcal{A}\bm{P} \nabla \mathcal{U}_{N},\bm{P}\nabla \mathcal{V}_{N})_{N}+(\mathcal{U}_{N},\mathcal{V}_{N})_{N}.$$ 
	Note that $(\cdot,\cdot)_{N}$ and $(\cdot,\cdot)$ coincide on $\mathcal{V}_{N}$, \textit{i.e.},
	\begin{equation}\label{eqn-7}
		(\mathcal{U}_{N},\mathcal{V}_{N})_{N} = (\mathcal{U}_{N},\mathcal{V}_{N}),\qquad \forall\,\mathcal{U}_{N},\mathcal{V}_{N}\in V^{N},
	\end{equation}
	which is based on the numerical integration formula
	$$\frac{1}{\left| \mathbb{T}^{n} \right|}\int_{\mathbb{T}^{n}}F{\rm d}\bm{y} \approx \frac{1}{N^{n}}\sum_{\bm{y_{j}}\in \mathbb{T}_{N}^{n}}{F}(\bm{y_{j}}).$$

The discrete form of the source problem \eqref{eq:source} is given by: Find $\mathcal{W}_N\in V^N$, such that
	\begin{equation}\label{eq:source_discrete}
		a(\mathcal{W}_N,\mathcal{V})=(\mathcal{F},\mathcal{V}),\qquad \forall\,\mathcal{V}\in V^N.
	\end{equation}

    \begin{lemma}\label{lem-05231}
		For any $\mathcal{U}_{N} \in V^{N}$,
		$a_{p}(\mathcal{U}_{N},\mathcal{U}_{N}) \gtrsim \|\mathcal{U}_{N}\|^{2}_{1}.$
	\end{lemma}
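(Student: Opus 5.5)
The plan is to reduce the discrete bilinear form to the continuous one on $V^N$ and then use uniform ellipticity of the coefficient. Write $a_p(\mathcal{U}_N,\mathcal{U}_N) = (\mathcal{A}\bm{P}\nabla\mathcal{U}_N,\bm{P}\nabla\mathcal{U}_N)_N + (\mathcal{U}_N,\mathcal{U}_N)_N$ and treat the two terms separately.

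\textbf{Zeroth-order term.} Since $\mathcal{U}_N\in V^N$, identity \eqref{eqn-7} gives $(\mathcal{U}_N,\mathcal{U}_N)_N=(\mathcal{U}_N,\mathcal{U}_N)=\|\mathcal{U}_N\|^2$. This follows from the discrete orthogonality \eqref{eqn-05222}: two modes in $K_N^n$ cannot differ by a nonzero element of $N\mathbb{Z}^n$, so no aliasing occurs.

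\textbf{Gradient term.} Evaluate $(\mathcal{A}\bm{P}\nabla\mathcal{U}_N,\bm{P}\nabla\mathcal{U}_N)_N$ directly from the definition \eqref{eqn-07291} as a grid sum:
$$\frac{1}{N^n}\sum_{\bm{y_j}\in\mathbb{T}_N^n}(\bm{P}\nabla\mathcal{U}_N(\bm{y_j}))^{*}\,\mathcal{A}(\bm{y_j})\,\bm{P}\nabla\mathcal{U}_N(\bm{y_j}).$$
The ellipticity assumption on $\alpha$ transfers to the parent function $\mathcal{A}$. Because $\bm{x}\mapsto \bm{P}^T\bm{x}$ has dense image modulo $2\pi\mathbb{Z}^n$ (the columns of $\bm{P}$ are $\mathbb{Q}$-linearly independent) and $\mathcal{A}$ is continuous, we get $\bm{\xi}^{*}\mathcal{A}(\bm{y})\bm{\xi}\ge C_0|\bm{\xi}|^2$ for all $\bm{y}\in\mathbb{T}^n$. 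Applied at each grid point, this yields
$$\frac{1}{N^n}\sum_{\bm{y_j}}|\bm{P}\nabla\mathcal{U}_N(\bm{y_j})|^2 = (\bm{P}\nabla\mathcal{U}_N,\bm{P}\nabla\mathcal{U}_N)_N$$
as a lower bound, up to the factor $C_0$. Each component of $\bm{P}\nabla\mathcal{U}_N$ is again a trigonometric polynomial with frequencies in $K_N^n$, since differentiation only multiplies coefficients by $i\bm{P}\bm{k}$. Hence \eqref{eqn-7} applies again, and the grid sum equals $\|\bm{P}\nabla\mathcal{U}_N\|^2$.

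\textbf{Conclusion.} Combining the two terms,
$$a_p(\mathcal{U}_N,\mathcal{U}_N)\ge C_0\|\bm{P}\nabla\mathcal{U}_N\|^2+\|\mathcal{U}_N\|^2\ge\min(C_0,1)\,\|\mathcal{U}_N\|_1^2,$$
using the norm \eqref{eqn-07302}.

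The main obstacle is the pointwise ellipticity of $\mathcal{A}$ on the whole torus rather than only along the irrational manifold $\{\bm{P}^T\bm{x}\}$. If continuity of $\mathcal{A}$ is assumed (Definition \ref{def:QP} requires a continuous parent function), the density argument above settles it. Otherwise I would impose ellipticity of $\mathcal{A}$ directly as the standing assumption. A secondary point is that $\mathcal{A}$ need not be Hermitian-valued; I would take its symmetric part, or assume it is real symmetric as the scalar/matrix ellipticity hypothesis suggests.
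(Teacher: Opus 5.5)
Your proof is correct and follows the same route as the paper: ellipticity bounds the coefficient term below by $C_0(\bm{P}\nabla\mathcal{U}_N,\bm{P}\nabla\mathcal{U}_N)_N$, and \eqref{eqn-7} then turns the discrete inner products into continuous ones, which give $\|\mathcal{U}_N\|_1^2$. Two of your steps are left implicit in the paper, and you make them explicit: the density-plus-continuity argument giving ellipticity of $\mathcal{A}$ at grid points off the irrational manifold, and the check that the components of $\bm{P}\nabla\mathcal{U}_N$ lie in $V^N$.
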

	\begin{proof}
		Based on (\ref{eqn-7}), we have
		 \begin{equation*}
			\begin{aligned}
				a_{p}(\mathcal{U}_{N},\mathcal{U}_{N})& \gtrsim (\bm{P}\nabla \mathcal{U}_{N},\bm{P}\nabla \mathcal{U}_{N})_{N}+(\mathcal{U}_{N},\mathcal{U}_{N})_{N}\\
				&=(\bm{P}\nabla \mathcal{U}_{N},\bm{P}\nabla \mathcal{U}_{N})+(\mathcal{U}_{N},\mathcal{U}_{N}) \\
				&= \|\mathcal{U}_{N}\|^{2}_{1},
			\end{aligned}
		 \end{equation*}
    which implies that $a_{p}(\cdot,\cdot)$ is coercive.
	\end{proof}
    
	The discrete Fourier series of high-dimensional periodic functions $\mathcal{U}(\bm{y})$ and $\mathcal{A}(\bm{y})$ are given by
    $$\mathcal{U}(\bm{y})=\sum_{\bm{k}_\mathcal{U}\in K^n_N}\widetilde{\mathcal{U}}_{\bm{k}_\mathcal{U}}e^{i\bm{k}_\mathcal{U}^{T}\bm{y_{j}}},~ \tilde{\mathcal{U}}_{\bm{k}_{\mathcal{U}}}= \frac{1}{N^{n}}\sum_{\bm{y_{j}}}\mathcal{U}(\bm{y_{j}})e^{-i \bm{k}^{T}_{\mathcal{U}}\bm{y_{j}}},~\bm{y_{j}}\in \mathbb{T}_{N}^{n},$$
    $$\mathcal{A}(\bm{y}_{\bm{j}})=\sum_{\bm{k}_\mathcal{A}\in K^n_N}\widetilde{\mathcal{A}}_{\bm{k}_\mathcal{A}}e^{i\bm{k}_\mathcal{A}^{T}\bm{y_{j}}},~\widetilde{\mathcal{A}}_{\bm{k}_\mathcal{A}} = \frac{1}{N^{n}}\sum_{\bm{y_{j}} \in \mathbb{T}_{N}^{n}}\mathcal{A}(\bm{y_{j}})e^{-i \bm{k}_\mathcal{A}^{T}\bm{y_{j}}},~\bm{y_{j}}\in \mathbb{T}_{N}^{n}.$$
    Then we can derive the discrete scheme of \eqref{eqn-6} as
\begin{equation}\label{eq:PM_discrete}
    \sum_{\bm{k}_{\mathcal{U}} \in K_{N}^{n}}\tilde{\mathcal{A}}_{\bm{k}_{\mathcal{A}}}(\bm{Pk}_{\mathcal{V}})^{T}(\bm{Pk}_{\mathcal{U}})\tilde{\mathcal{U}}_{\bm{k}_{\mathcal{U}}}+ \tilde{\mathcal{U}}_{\bm{k}_{\mathcal{V}}}= \gamma \tilde{\mathcal{U}}_{\bm{k}_{\mathcal{V}}},\quad \bm{k}_{\mathcal{A}} = (\bm{k}_{\mathcal{V}}-\bm{k}_{\mathcal{U}})({\rm mod}~ N).
\end{equation}
  
	Furthermore, we can generate the matrix eigenvalue problem using the tensor-vector-index conversion \cite{jiang2024}. Define matrices $\bm{A}=(A_{ij})$ and $\bm{W}=(W_{ij})$ as follows
	$$A_{ij} = \tilde{\mathcal{A}}_{\bm{k}_{\mathcal{A}}},~\quad W_{ij} = (\bm{Pk}_{\mathcal{V}})^{T}(\bm{Pk}_{\mathcal{U}}).$$
	The column vectors $\bm{U}=(U_j)$ are defined by
	$U_{j} = \tilde{\mathcal{U}}_{\bm{k}_{\mathcal{U}}}.$
	Consequently, we obtain the following linear system for \eqref{eq:PM_discrete}
    \begin{equation}\label{eq:lin_sys}
        \bm{QU} = \gamma \bm{U},~ \bm{Q} = \bm{A} \circ \bm{W}.
    \end{equation}
    
    The truncated and interpolated projection operators of PM are defined by
		$$\mathcal{P}_{N}\mathcal{U}(\bm{x})=\sum_{\left|\bm{k}\right| \leq N}\hat{\mathcal{U}}_{\bm{k}}e^{i\bm{k}^T\bm{x}},\quad \mathcal{I}_{N}\mathcal{U}(\bm{x})=\sum_{\left|\bm{k}\right| \leq N}\tilde{\mathcal{U}}_{\bm{k}}e^{i\bm{k}^T\bm{x}}.$$
    We now give the convergence of these two operators, which will be useful to the theoretical analysis of PM for QEOs in the next subsection.
	\begin{lemma}\label{lem-3}
		Let $r \geq s$, then for any $\mathcal{U} \in \mathcal{H}^{r}(\mathbb{T}^{n})$, 
		$$\|\mathcal{U}-\mathcal{P}_{N}\mathcal{U}\|_{s} \lesssim (1+N^{2})^{\frac{s-r}{2}}\|\mathcal{U}\|_{H^{r}(\mathbb{T}^{n})}.$$
	\end{lemma}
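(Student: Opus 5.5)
The approach is a direct Parseval computation on the Fourier side. Here $\|\cdot\|_s$ is the $\mathcal{H}^s_{\bm{P}}(\mathbb{T}^n)$ norm with weights $(1+|\bm{P}\bm{k}|^2)^s$, and $\|\cdot\|_{H^r(\mathbb{T}^n)}$ carries the weights $(1+|\bm{k}|^2)^r$. Since $\mathcal{P}_N$ is the Fourier truncation, the error $\mathcal{U}-\mathcal{P}_N\mathcal{U}$ consists exactly of the modes with $|\bm{k}|>N$. So
$$\|\mathcal{U}-\mathcal{P}_N\mathcal{U}\|_s^2=\sum_{|\bm{k}|>N}(1+|\bm{P}\bm{k}|^2)^s|\hat{\mathcal{U}}_{\bm{k}}|^2 .$$
The whole task is to bound each weight by $(1+N^2)^{s-r}(1+|\bm{k}|^2)^r$, up to a constant, and then sum.

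The first step compares the two weights. Since $|\bm{P}\bm{k}|\le \|\bm{P}\|_2|\bm{k}|$, we have $1+|\bm{P}\bm{k}|^2\le \max(1,\|\bm{P}\|_2^2)(1+|\bm{k}|^2)$. For $s\ge 0$ this gives
$$(1+|\bm{P}\bm{k}|^2)^s\lesssim (1+|\bm{k}|^2)^s ,$$
with a constant depending only on $\bm{P}$ and $s$. I will take $s\ge0$, as in the paper's usage. For negative $s$ a lower bound would be needed, and that fails because $\bm{P}\bm{k}$ can be small for large $\bm{k}$. I will restrict to $s\ge 0$, or note that for $s<0$ one uses instead the trivial bound $(1+|\bm{P}\bm{k}|^2)^s\le 1$.

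The second step factors the weight as
$$(1+|\bm{k}|^2)^s=(1+|\bm{k}|^2)^{s-r}(1+|\bm{k}|^2)^r .$$
Because $r\ge s$ and $|\bm{k}|>N$, the factor $(1+|\bm{k}|^2)^{s-r}$ is at most $(1+N^2)^{s-r}$. Summing over the tail then gives
$$\|\mathcal{U}-\mathcal{P}_N\mathcal{U}\|_s^2\lesssim (1+N^2)^{s-r}\sum_{|\bm{k}|>N}(1+|\bm{k}|^2)^r|\hat{\mathcal{U}}_{\bm{k}}|^2\le (1+N^2)^{s-r}\|\mathcal{U}\|_{H^r(\mathbb{T}^n)}^2 ,$$
and taking square roots gives the claim.

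One point needs care. The notation section defines $|\bm{k}|$ as the plain sum of the entries, whereas the Sobolev weight uses the Euclidean length. I will read $|\bm{k}|$ in the truncation condition as a norm equivalent to the Euclidean norm, for example $\ell^\infty$ or $\ell^1$ of the absolute values. The tail condition then gives $|\bm{k}|_2\gtrsim N$, which only changes constants. This norm-equivalence bookkeeping, together with the $\bm{P}$-dependent constant from the first step, is the only real obstacle.
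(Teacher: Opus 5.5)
Your argument is correct and is essentially the paper's proof. Both write the error via Parseval as a tail sum over $|\bm{k}|>N$, compare weights by $(1+|\bm{P}\bm{k}|^2)^s\lesssim(1+|\bm{k}|^2)^s$, and then bound $(1+|\bm{k}|^2)^{s-r}\lesssim(1+N^2)^{s-r}$ on the tail; your explicit restriction to $s\ge 0$ is exactly what the paper's first inequality silently requires.

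Do drop the aside about using $(1+|\bm{P}\bm{k}|^2)^s\le 1$ for $s<0$, because that only gives the rate $(1+N^2)^{-r/2}$. Your own small-$|\bm{P}\bm{k}|$ observation in fact shows the stated rate $(1+N^2)^{(s-r)/2}$ fails when $s<0$: take plane waves $e^{i\bm{k}^T\bm{y}}$ with $|\bm{k}|\sim N$ and $|\bm{P}\bm{k}|$ bounded.
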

	\begin{proof}
    \begin{align*}
        \|\mathcal{U}-\mathcal{P}_{N}\mathcal{U}\|^{2}_{s}=&\sum_{\left|k_{i}\right| \geq N}(1+\left| \bm{Pk} \right|^{2})^{s} \left|\hat{\mathcal{U}}_{\bm{Pk}}\right|^{2} \\
              \lesssim&\sum_{\left|k_{i}\right| \geq N}(1+\left|\bm{k}\right|^{2})^{s-r+r}\left|\hat{\mathcal{U}}_{\bm{Pk}}\right|^{2}\\
        \lesssim&\sum_{\left|k_{i}\right| \geq N}(1+\left|\bm{k}\right|^{2})^{s-r}(1+\left| \bm{k} \right|^{2})^{r} \left|\hat{\mathcal{U}}_{\bm{Pk}}\right|^{2} \\
        \lesssim&(1+N^{2})^{s-r}\|\mathcal{U}\|_{\mathcal{H}^{r}(\mathbb{T}^{n})}^{2}.
    \end{align*}
	\end{proof}
    \begin{lemma}[\cite{mercier1989}, Theorem 8.1]\label{thm-2} For any $\mathcal{U} \in \mathcal{H}^{s}(\mathbb{T}^{n})$, 
		$$\|\mathcal{U}-\mathcal{I}_{N}\mathcal{U}\|_{\mathcal{L}^{2}(\mathbb{T}^{n})}\lesssim N^{-s/2}\|\mathcal{U}\|_{\mathcal{H}^{s}(\mathbb{T}^{n})}.$$
	\end{lemma}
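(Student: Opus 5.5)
The plan is the classical aliasing argument for Fourier interpolation on the grid $\mathbb{T}_N^n$. I will in fact aim for the sharper bound $N^{-s}\|\mathcal{U}\|_{\mathcal{H}^{s}(\mathbb{T}^{n})}$, which implies the stated $N^{-s/2}$ rate since $N\ge 1$. Point values of $\mathcal{U}$ must make sense, so I assume $s>n/2$. In that case $\mathcal{U}$ is continuous and its Fourier series converges absolutely; this is implicit in the statement. I also identify the index set $\{|\bm{k}|\le N\}$ in the definitions of $\mathcal{P}_N$ and $\mathcal{I}_N$ with $K_N^n$. The argument is insensitive to this choice, provided both operators use the same set and every $\bm{k}+N\bm{m}$ with $\bm{m}\neq 0$ lies outside it at distance $\gtrsim N|\bm{m}|$.

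\emph{Step 1 (aliasing formula).} Insert the Fourier series $\mathcal{U}=\sum_{\bm{l}\in\mathbb{Z}^n}\hat{\mathcal{U}}_{\bm{l}}e^{i\bm{l}^T\bm{y}}$ into the definition of $\tilde{\mathcal{U}}_{\bm{k}}$. Exchanging the finite grid sum with the absolutely convergent series and using the discrete orthogonality \eqref{eqn-05222} gives
\[
\tilde{\mathcal{U}}_{\bm{k}}=\hat{\mathcal{U}}_{\bm{k}}+\sum_{\bm{m}\in\mathbb{Z}^n\setminus\{0\}}\hat{\mathcal{U}}_{\bm{k}+N\bm{m}},\qquad \bm{k}\in K_N^n .
\]

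\emph{Step 2 (splitting).} Write $\mathcal{U}-\mathcal{I}_N\mathcal{U}=(\mathcal{U}-\mathcal{P}_N\mathcal{U})+(\mathcal{P}_N\mathcal{U}-\mathcal{I}_N\mathcal{U})$. These two pieces have disjoint Fourier supports, so by Parseval their squared $\mathcal{L}^2$ norms add. The truncation piece is bounded by Lemma \ref{lem-3} with $s=0$ and $r=s$: it is $\lesssim N^{-s}\|\mathcal{U}\|_{\mathcal{H}^s(\mathbb{T}^n)}$. Alternatively, one can bound $\sum_{\bm{k}\notin K_N^n}|\hat{\mathcal{U}}_{\bm{k}}|^2$ directly, since $1+|\bm{k}|^2\gtrsim N^2$ off $K_N^n$.

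\emph{Step 3 (aliasing error, the main step).} By Step 1 and Parseval,
\[
\|\mathcal{P}_N\mathcal{U}-\mathcal{I}_N\mathcal{U}\|^2_{\mathcal{L}^2(\mathbb{T}^n)}=\sum_{\bm{k}\in K_N^n}\Bigl|\sum_{\bm{m}\neq0}\hat{\mathcal{U}}_{\bm{k}+N\bm{m}}\Bigr|^2 .
\]
For each fixed $\bm{k}$, apply Cauchy--Schwarz with weights $(1+|\bm{k}+N\bm{m}|^2)^{\pm s}$. This bounds the inner square by
\[
\Bigl(\sum_{\bm{m}\neq0}(1+|\bm{k}+N\bm{m}|^2)^{-s}\Bigr)\Bigl(\sum_{\bm{m}\neq0}(1+|\bm{k}+N\bm{m}|^2)^{s}|\hat{\mathcal{U}}_{\bm{k}+N\bm{m}}|^2\Bigr).
\]
For $\bm{k}\in K_N^n$ and $\bm{m}\neq0$, each coordinate satisfies $|k_j+Nm_j|\ge N|m_j|/2$, so $|\bm{k}+N\bm{m}|\gtrsim N|\bm{m}|$. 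Hence the first factor is $\lesssim N^{-2s}\sum_{\bm{m}\neq0}|\bm{m}|^{-2s}$, which is a finite constant exactly because $2s>n$. The maps $(\bm{k},\bm{m})\mapsto\bm{k}+N\bm{m}$ with $\bm{k}\in K_N^n$ are injective, so summing the second factor over $\bm{k}$ gives at most $\|\mathcal{U}\|^2_{\mathcal{H}^s(\mathbb{T}^n)}$. The aliasing term is therefore $\lesssim N^{-2s}\|\mathcal{U}\|^2_{\mathcal{H}^s(\mathbb{T}^n)}$.

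Combining Steps 2 and 3 gives $\|\mathcal{U}-\mathcal{I}_N\mathcal{U}\|_{\mathcal{L}^2(\mathbb{T}^n)}\lesssim N^{-s}\|\mathcal{U}\|_{\mathcal{H}^s(\mathbb{T}^n)}\le N^{-s/2}\|\mathcal{U}\|_{\mathcal{H}^s(\mathbb{T}^n)}$.

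The main obstacle is Step 3. It is the only place where the regularity threshold $s>n/2$ enters, through the convergence of the lattice sum $\sum_{\bm{m}\neq0}|\bm{m}|^{-2s}$. It also needs the uniform lower bound $|\bm{k}+N\bm{m}|\gtrsim N|\bm{m}|$ over the whole truncation set, so the index sets must be chosen consistently.
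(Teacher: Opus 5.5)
Your argument is correct, including the added hypothesis $s>n/2$, which the statement leaves implicit but which is needed for $\mathcal{I}_N\mathcal{U}$ to be defined; the paper gives no proof of its own and only cites Mercier's Theorem 8.1, of which your aliasing argument is the classical proof. Your sharper rate $N^{-s}$ is worth keeping: it is what the paper actually uses in the proof of Lemma~\ref{lem-07281}, where the bound on $\|\mathcal{I}_N\mathcal{V}\|_0$ is written as $(1+N^2)^{-r/2}$, and that bound would not follow from the stated $N^{-s/2}$.
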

	\begin{lemma}\label{lem-07281}
        Let $r \geq s$, then for any $\mathcal{U} \in \mathcal{H}^{r}(\mathbb{T}^{n})$, 
		$$\|\mathcal{U}-\mathcal{I}_{N}\mathcal{U}\|_{s} \lesssim (1+N^{2})^{\frac{s-r}{2}}\|\mathcal{U}\|_{\mathcal{H}^{r}(\mathbb{T}^{n})}.$$
	\end{lemma}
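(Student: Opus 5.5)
The natural route is to split the interpolation error through the truncation operator and then control the aliasing part by hand:
\[
\mathcal{U}-\mathcal{I}_{N}\mathcal{U}=(\mathcal{U}-\mathcal{P}_{N}\mathcal{U})+(\mathcal{P}_{N}\mathcal{U}-\mathcal{I}_{N}\mathcal{U}).
\]
The first term is bounded directly by \Cref{lem-3}, giving $(1+N^{2})^{\frac{s-r}{2}}\|\mathcal{U}\|_{\mathcal{H}^{r}(\mathbb{T}^{n})}$.

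For the second term, both $\mathcal{P}_{N}\mathcal{U}$ and $\mathcal{I}_{N}\mathcal{U}$ lie in the same trigonometric space, so the difference only involves coefficients $\hat{\mathcal{U}}_{\bm k}-\tilde{\mathcal{U}}_{\bm k}$ with $\bm k$ in the truncation set. By the discrete orthogonality \eqref{eqn-05222}, the aliasing formula holds:
\[
\tilde{\mathcal{U}}_{\bm k}=\sum_{\bm m\in\mathbb{Z}^n}\hat{\mathcal{U}}_{\bm k+N\bm m},
\qquad\text{so}\qquad
\hat{\mathcal{U}}_{\bm k}-\tilde{\mathcal{U}}_{\bm k}=\sum_{\bm m\neq 0}\hat{\mathcal{U}}_{\bm k+N\bm m}.
\]
This requires $\mathcal{U}$ continuous, i.e.\ $r>n/2$, or one argues by density.

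Next, since $|\bm P\bm k|\lesssim|\bm k|$, the $\|\cdot\|_s$ weight satisfies $(1+|\bm P\bm k|^2)^{s}\lesssim(1+|\bm k|^2)^{s}\lesssim (1+N^2)^{s}$ on the truncation set, assuming $s\ge0$. Applying Cauchy--Schwarz to the aliasing sum gives
\[
\Bigl|\sum_{\bm m\ne0}\hat{\mathcal{U}}_{\bm k+N\bm m}\Bigr|^2\le\Bigl(\sum_{\bm m\ne0}(1+|\bm k+N\bm m|^2)^{-r}\Bigr)\Bigl(\sum_{\bm m\ne0}(1+|\bm k+N\bm m|^2)^{r}|\hat{\mathcal{U}}_{\bm k+N\bm m}|^2\Bigr).
\]
For $|\bm k|\lesssim N$ and $\bm m\ne0$ we have $|\bm k+N\bm m|\gtrsim N|\bm m|$. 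Hence the first factor is $\lesssim N^{-2r}\sum_{\bm m\ne0}|\bm m|^{-2r}\lesssim (1+N^2)^{-r}$, which is finite provided $2r>n$.

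Summing over $\bm k$, each high frequency $\bm k+N\bm m$ appears at most once, so the second factors add up to at most $\|\mathcal{U}\|^2_{\mathcal{H}^r(\mathbb{T}^n)}$. Combining with the weight bound,
\[
\|\mathcal{P}_N\mathcal{U}-\mathcal{I}_N\mathcal{U}\|_s^2\lesssim(1+N^2)^{s-r}\|\mathcal{U}\|^2_{\mathcal{H}^r(\mathbb{T}^n)},
\]
and the two terms together give the claim.

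The main obstacle is this aliasing step. It needs $r>n/2$ so that the lattice sum converges and point values make sense; I would state that assumption, which is implicit because $\mathcal{I}_N$ is defined via point values. The other delicate point is reconciling the index set $|\bm k|\le N$ in $\mathcal{I}_N$ with the grid $K_N^n$: I would take the truncation set to be $K_N^n$ (or one contained in it, with radius comparable to $N$) so that distinct $\bm k$ do not alias onto each other. If $s<0$ were allowed, I would instead bound the weight by $1$ and use the lemma only for $s\ge 0$.
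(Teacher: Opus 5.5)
Your proof is correct and follows the paper's outline. You use the same split $\mathcal{U}-\mathcal{I}_N\mathcal{U}=(\mathcal{U}-\mathcal{P}_N\mathcal{U})+(\mathcal{P}_N\mathcal{U}-\mathcal{I}_N\mathcal{U})$; the paper writes the second piece as $\mathcal{I}_N(\mathcal{P}_N-I)\mathcal{U}$ using $\mathcal{I}_N\mathcal{P}_N=\mathcal{P}_N$. You also use \Cref{lem-3} for the first piece and the same inverse-type bound $(1+|\bm P\bm k|^2)^s\lesssim(1+N^2)^s$ on the retained modes, which reduces the second piece to its $\mathcal{L}^2$ norm.

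You differ only in how that $\mathcal{L}^2$ norm is controlled. The paper uses $\|\mathcal{P}_N\mathcal{U}-\mathcal{I}_N\mathcal{U}\|_0\le\|\mathcal{U}-\mathcal{I}_N\mathcal{U}\|_0+\|\mathcal{U}-\mathcal{P}_N\mathcal{U}\|_0$ and quotes the interpolation estimate of \Cref{thm-2}. You instead compute the aliasing error directly from $\tilde{\mathcal{U}}_{\bm k}=\sum_{\bm m}\hat{\mathcal{U}}_{\bm k+N\bm m}$ and Cauchy--Schwarz, which in effect reproves the cited result.

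The paper's route is shorter, but it needs the sharp rate $\|\mathcal{U}-\mathcal{I}_N\mathcal{U}\|_0\lesssim N^{-r}\|\mathcal{U}\|_{\mathcal{H}^r(\mathbb{T}^n)}$. \Cref{thm-2} as printed gives only $N^{-r/2}$, which does not produce the factor $(1+N^2)^{-r/2}$ used there. Your computation supplies the sharp rate. It also makes explicit the hypotheses the paper leaves tacit:
\begin{itemize}
\item $s\ge 0$, which is also implicit in \Cref{lem-3};
\item retained modes with distinct residues mod $N$ (e.g.\ $K_N^n$ rather than $|\bm k|\le N$);
\item $r>n/2$.
\end{itemize}

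On the last one, drop the density fallback you mention. For $r\le n/2$, point evaluation, and hence $\mathcal{I}_N$, is unbounded on $\mathcal{H}^r(\mathbb{T}^n)$, and the inequality genuinely fails. So $r>n/2$ is a real hypothesis of the lemma, not a technicality.
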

	\begin{proof}
		Noting that $\mathcal{I}_{N}$ leaves $V^{N}$ invariant, we have
		$$\mathcal{I}_{N}\mathcal{P}_{N} = \mathcal{P}_{N},$$
		and
		$$\mathcal{U}-\mathcal{I}_{N}\mathcal{U} = \mathcal{U}-\mathcal{P}_{N}\mathcal{U}+\mathcal{I}_{N}(\mathcal{P}_{N}-I)\mathcal{U}.$$
		Therefore, by setting $\mathcal{V} = (I-\mathcal{P}_{N})\mathcal{U}$, we have
		$$\|\mathcal{U}-\mathcal{I}_{N}\mathcal{U}\|_{s} \leq \|\mathcal{U}-\mathcal{P}_{N}\mathcal{U}\|_{s}+\|\mathcal{I}_{N}\mathcal{V}\|_{s}.$$ 
		The first term can be directly derived from \Cref{lem-3}. We now turn to the second one.
		
		Since $\mathcal{I}_{N}\mathcal{V} \in V^{N}$, we apply the inverse inequality to obtain
\begin{equation}
\begin{aligned}
		\|\mathcal{I}_{N}\mathcal{V}\|_{s}^{2}&= \sum_{\left|\bm{k}\right| \leq N}(1+\left|\bm{Pk}\right|^{2})^{s}\left|\hat{\mathcal{V}}_{\bm{Pk}}\right|^{2}\\
&\leq C(1+N^{2})^{s}\sum_{\left|\bm{k}\right| \leq N}(1+\left|\bm{Pk}\right|^{2})^{0}\left|\hat{\mathcal{V}}_{\bm{Pk}}\right|^{2}\\
 &= C(1+N^{2})^{s}\|\mathcal{I}_{N}\mathcal{V}\|_{0}^{2},\nonumber
\end{aligned}
\end{equation}
From \Cref{thm-2}, we have
	\begin{align*}
		\|\mathcal{I}_{N}\mathcal{V}\|_{0} &\leq \|\mathcal{U}-\mathcal{I}_{N}\mathcal{U}\|_{0}+\|\mathcal{U}-\mathcal{P}_{N}\mathcal{U}\|_{0} \\
		&\leq C(1+N^{2})^{-\frac{r}{2}}\|\mathcal{U}\|_{\mathcal{H}^{r}(\mathbb{T}^{n})}.
	\end{align*}
	\end{proof}

    \begin{lemma}\label{thm-11261}
        Consider the quasiperiodic elliptic equation $\mathcal{L}u=f$. 
		If the solution $u \in \mathcal{H}^{t}_{\rm QP}(\mathbb{R}^{d}) $ and its parent function $\mathcal{U} \in \mathcal{H}^{s}(\mathbb{T}^{n}),~s \leq t$, then the error estimate of PM is 
		$$\|\mathcal{U}-\mathcal{U}_{N}\|_{1} \lesssim (1+N^{2})^{-\frac{s-1}{2}}\|\mathcal{U}\| _{\mathcal{H}^{s}(\mathbb{T}^{n})}.$$
	\end{lemma}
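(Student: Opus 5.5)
We follow the standard Céa-type argument for the lifted source problem \eqref{eq:source} and its Galerkin discretization \eqref{eq:source_discrete}, then combine it with the approximation results \Cref{lem-3} and \Cref{lem-07281}. By \Cref{thm-1} and the orthogonality \eqref{eqn-05221}, the quasiperiodic equation $\mathcal{L}u=f$ is equivalent to \eqref{eq:source} for the parent functions, with $\mathcal{F}$ the parent function of $f$. Its PM approximation $\mathcal{U}_N\in V^N$ is defined by \eqref{eq:source_discrete}, or equivalently by the collocation form based on $(\cdot,\cdot)_N$. So it suffices to bound $\|\mathcal{U}-\mathcal{U}_N\|_1$ in the Hilbert space $\mathcal{H}^1_{\bm{P}}(\mathbb{T}^n)$ (\Cref{thm-com}).

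\textbf{Step 1: well-posedness.} The form $a(\cdot,\cdot)$ is bounded on $\mathcal{H}^1_{\bm{P}}(\mathbb{T}^n)$, with $|a(\mathcal{U},\mathcal{V})|\le \max(C_1,1)\|\mathcal{U}\|_1\|\mathcal{V}\|_1$, because $\mathcal{A}$ is bounded by $C_1$. It is also coercive, with $a(\mathcal{U},\mathcal{U})\ge \min(C_0,1)\|\mathcal{U}\|_1^2$. Here we use that uniform ellipticity of $\alpha$ on $\mathbb{R}^d$ transfers to $\mathcal{A}$ on the whole torus: $\{\bm{P}^T\bm{x}\}$ is dense in $\mathbb{T}^n$ since the columns of $\bm{P}$ are $\mathbb{Q}$-linearly independent, and $\mathcal{A}$ is continuous. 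Lax--Milgram then gives unique solvability of both \eqref{eq:source} and \eqref{eq:source_discrete}, the latter being posed on the subspace $V^N$. Where quadrature is involved, \Cref{lem-05231} supplies the discrete coercivity.

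\textbf{Step 2: quasi-optimality.} Galerkin orthogonality gives $a(\mathcal{U}-\mathcal{U}_N,\mathcal{V}_N)=0$ for all $\mathcal{V}_N\in V^N$. Hence
\[
\|\mathcal{U}-\mathcal{U}_N\|_1^2\lesssim a(\mathcal{U}-\mathcal{U}_N,\mathcal{U}-\mathcal{V}_N)\lesssim \|\mathcal{U}-\mathcal{U}_N\|_1\|\mathcal{U}-\mathcal{V}_N\|_1,
\]
so $\|\mathcal{U}-\mathcal{U}_N\|_1\lesssim \inf_{\mathcal{V}_N\in V^N}\|\mathcal{U}-\mathcal{V}_N\|_1$.

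\textbf{Step 3: approximation.} Choose $\mathcal{V}_N=\mathcal{P}_N\mathcal{U}$ and apply \Cref{lem-3} with $s=1$ and $r=s$ (the regularity index of the statement). This gives
\[
\|\mathcal{U}-\mathcal{U}_N\|_1\lesssim (1+N^2)^{-\frac{s-1}{2}}\|\mathcal{U}\|_{\mathcal{H}^s(\mathbb{T}^n)}.
\]
The hypothesis $u\in\mathcal{H}^t_{\rm QP}$ with $s\le t$ only guarantees consistency: it allows $\mathcal{U}\in\mathcal{H}^s(\mathbb{T}^n)$ while the $\bm{P}$-weighted norms remain controlled, since $|\bm{Pk}|\lesssim|\bm{k}|$.

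\textbf{Main obstacle.} The delicate point arises if $\mathcal{U}_N$ is understood as the fully discrete collocation solution, with $a_p$ and $(\cdot,\cdot)_N$ in place of $a$ and $(\cdot,\cdot)$. In that case the plain Céa lemma must be replaced by Strang's first lemma. This adds the consistency errors
\[
\sup_{\mathcal{W}_N\in V^N}\frac{|a(\mathcal{V}_N,\mathcal{W}_N)-a_p(\mathcal{V}_N,\mathcal{W}_N)|}{\|\mathcal{W}_N\|_1}
\]
and the analogous right-hand-side quadrature term. We would control these by aliasing through \eqref{eqn-05222}. The quadrature error in $(\mathcal{A}\bm{P}\nabla\mathcal{V}_N,\bm{P}\nabla\mathcal{W}_N)_N$ equals the error of replacing $\mathcal{A}$ by $\mathcal{I}_N\mathcal{A}$, up to aliased modes. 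Bounding it via \Cref{lem-07281} applied to $\mathcal{A}$ and to $\mathcal{F}$ yields terms of the same order $(1+N^2)^{-(s-1)/2}$, provided $\mathcal{A}$ and $\mathcal{F}$ are sufficiently smooth. We expect this aliasing bookkeeping, and not the Céa step, to require the most care.
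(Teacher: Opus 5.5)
\textbf{Verdict.} Your argument is correct for the Galerkin problem, but not for the scheme the paper's proof actually treats.

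\textbf{What you prove.} Your C\'ea argument is correct for \eqref{eq:source_discrete} as literally written, with the exact form $a(\cdot,\cdot)$. Galerkin orthogonality plus \Cref{lem-3} with $(s,r)\to(1,s)$ gives the rate. Your density argument that $\mathcal{A}\geq C_0$ on all of $\mathbb{T}^n$ supplies a step the paper skips. For that problem the paper's extra terms vanish and its proof collapses to yours.

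\textbf{What the lemma is about.} The lemma concerns PM, i.e.\ the pseudo-spectral form $a_p$ with $(\cdot,\cdot)_N$ from \eqref{eqn-6} and \eqref{eqn-06171}. The paper's proof is a Strang-type argument. It splits through $\mathcal{P}_N\mathcal{U}$ and applies the coercivity of $a_p$ (\Cref{lem-05231}) to $e:=\mathcal{P}_N\mathcal{U}-\mathcal{U}_N$. It then bounds three terms: a consistency term $a_p(\mathcal{P}_N\mathcal{U},e)-a(\mathcal{P}_N\mathcal{U},e)$, an approximation term, and a right-hand-side term.

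Your remark that the collocation form is ``equivalent'' to \eqref{eq:source_discrete} is false for non-constant $\mathcal{A}$. The product $\mathcal{A}\bm{P}\nabla\mathcal{U}_N$ has modes outside $K_N^n$ that alias on $\mathbb{T}_N^n$; this is the ${\rm mod}~N$ in \eqref{eq:PM_discrete}. That discrepancy is precisely the consistency error, which is the only nontrivial part of the lemma for PM. Your proposal leaves it as an unexecuted plan.

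\textbf{Why your sketch does not close.} The paper's coefficients are trigonometric polynomials, so $\mathcal{I}_N\mathcal{A}=\mathcal{A}$ as soon as $N\geq 4$. Then the ``replace $\mathcal{A}$ by $\mathcal{I}_N\mathcal{A}$'' part is zero, and the whole consistency error lies in the aliased modes you leave unestimated. For general $\mathcal{A}$, that first part would also need an $\mathcal{L}^\infty$ bound on one factor, which \Cref{lem-07281} does not provide.

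\textbf{The missing idea.} Interpolate the entire flux rather than the coefficient. Set $G:=\mathcal{A}\bm{P}\nabla\mathcal{P}_N\mathcal{U}$. Since $(\cdot,\cdot)_N$ depends only on grid values and coincides with $(\cdot,\cdot)$ on $V^N$ by \eqref{eqn-7},
\begin{equation*}
a_p(\mathcal{P}_N\mathcal{U},e)-a(\mathcal{P}_N\mathcal{U},e)=\left(\mathcal{I}_NG-G,\bm{P}\nabla e\right).
\end{equation*}
\Cref{lem-07281} with $(s,r)\to(0,s-1)$ then bounds this by $(1+N^2)^{-\frac{s-1}{2}}\|G\|_{\mathcal{H}^{s-1}(\mathbb{T}^n)}\|e\|_1$. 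Moreover, $\|G\|_{\mathcal{H}^{s-1}(\mathbb{T}^n)}\lesssim\|\mathcal{U}\|_{\mathcal{H}^{s}(\mathbb{T}^n)}$, with a constant depending on $\mathcal{A}$ as a multiplier on $\mathcal{H}^{s-1}(\mathbb{T}^n)$. This single estimate absorbs both the coefficient error and the aliasing.

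The approximation term is your Step 3. The right-hand-side term is handled by orthogonality of $\mathcal{P}_N$ as in the paper. If the load is also evaluated by quadrature, it is handled instead by \Cref{lem-07281} applied to $\mathcal{F}$.
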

	\begin{proof}
		We need to estimate
		$$\|\mathcal{U}-\mathcal{U}_{N}\|_{1} \leq \|\mathcal{U}-\mathcal{P}_{N}\mathcal{U}\|_{1}+\|\mathcal{P}_{N}\mathcal{U}-\mathcal{U}_{N}\|_{1}.$$ 
		For the second term of the error estimate, from the coercivity of the bilinear form in the numerical scheme
		\begin{equation}
			\begin{aligned}
				C\|\mathcal{P}_{N}\mathcal{U}-\mathcal{U}_{N}\|^{2}_{1} \leq&a_{p}(\mathcal{P}_{N}\mathcal{U}-\mathcal{U}_{N},\mathcal{P}_{N}\mathcal{U}-\mathcal{U}_{N})\\
				=&a_{p}({P}_{N}\mathcal{U},{P}_{N}\mathcal{U}-\mathcal{U}_{N})-a({P}_{N}\mathcal{U},{P}_{N}\mathcal{U}-\mathcal{U}_{N})\\
				&+a({P}_{N}\mathcal{U}-\mathcal{U},{P}_{N}\mathcal{U}-\mathcal{U}_{N})+(\mathcal{F}-\mathcal{P}_{N}\mathcal{F},{P}_{N}\mathcal{U}-\mathcal{U}_{N}). \nonumber
			\end{aligned}
		\end{equation}
The first two terms are given by
\begin{equation}
  \begin{aligned}
     &\left(\mathcal{I}_{N}(\mathcal{A}\bm{P} \nabla \mathcal{P}_{N}\mathcal{U}),\bm{P} \nabla(\mathcal{P}_{N}\mathcal{U}-\mathcal{U}_{N})\right)_{N}-\left(\mathcal{A}\bm{P} \nabla \mathcal{P}_{N}\mathcal{U},\bm{P} \nabla(\mathcal{P}_{N}\mathcal{U}-\mathcal{U}_{N})\right)\\
=&\left(\mathcal{I}_{N}(\mathcal{A}\bm{P} \nabla \mathcal{P}_{N}\mathcal{U})-\mathcal{A}\bm{P} \nabla \mathcal{P}_{N}\mathcal{U},\bm{P} \nabla(\mathcal{P}_{N}\mathcal{U}-\mathcal{U}_{N})\right),\nonumber
  \end{aligned}
\end{equation}
		Using \Cref{lem-07281}, the above term can be bounded by the interpolated projection inequality as
\begin{equation}
	\begin{aligned}
		&\|\mathcal{I}_{N}(\mathcal{A}\bm{P} \nabla \mathcal{P}_{N}\mathcal{U})-\mathcal{A}\bm{P} \nabla \mathcal{P}_{N}\mathcal{U}\|_{0} \|\mathcal{P}_{N}\mathcal{U}-\mathcal{U}_{N}\|_{1}\\
        \leq& C(1+N^{2})^{-\frac{s-1}{2}}\|\mathcal{A}\bm{P} \nabla \mathcal{P}_{N}\mathcal{U}\| _{s-1}\|\mathcal{P}_{N}\mathcal{U}-\mathcal{U}_{N}\|_{1}\\
		\leq&C(1+N^{2})^{-\frac{s-1}{2}}\|\bm{P}\nabla \mathcal{U}\| _{s}\|\mathcal{P}_{N}\mathcal{U}-\mathcal{U}_{N}\|_{1} \\
		\leq&C(1+N^{2})^{-\frac{s-1}{2}}\|\mathcal{U}\| _{H^{s}(\mathbb{T}^{n})}\|\mathcal{P}_{N}\mathcal{U}-\mathcal{U}_{N}\|_{1}. \nonumber
	\end{aligned}
\end{equation}
		The third term is given by \Cref{lem-3}
		\begin{equation}
			\begin{aligned}
				a({P}_{N}\mathcal{U}-\mathcal{U},{P}_{N}\mathcal{U}-\mathcal{U}_{N}) \leq& \|\mathcal{P}_{N}\mathcal{U}-\mathcal{U}_{N}\|_{1} \|\mathcal{P}_{N}\mathcal{U}-\mathcal{U}\|_{1} \\
				\leq& C(1+N^{2})^{-\frac{s-1}{2}}\|\mathcal{U}\| _{s}\|\mathcal{P}_{N}\mathcal{U}-\mathcal{U}_{N}\|_{1}.\nonumber
			\end{aligned}
		\end{equation}
		The fourth term follows from the orthogonality of the Fourier basis as
			\begin{align*}
				(\mathcal{F}-\mathcal{P}_{N}\mathcal{F},\mathcal{P}_{N}\mathcal{U}-\mathcal{U}_{N}) 
				\leq&C\|\mathcal{F}-\mathcal{P}_{N}\mathcal{F}\|_{-1}\|\mathcal{P}_{N}\mathcal{U}-\mathcal{U}_{N}\|_{1} \\
				\leq& C(1+N^{2})^{-\frac{s-2+1}{2}}\|\mathcal{F}\| _{s-2}\|\mathcal{P}_{N}\mathcal{U}-\mathcal{U}_{N}\|_{1}.
			\end{align*}
	\end{proof}

	\subsection{Convergence analysis}
    \label{subsec:con}
In this subsection, we analyze the convergence of our method for eigenvalues (see \Cref{thm:value}) and the corresponding eigenfunctions (see \Cref{thm:function}). 
    
    Before delving into the main results, we introduce the operator norm and the distance between subspaces. 
    For a bounded linear operator $B$, we define its subspace operator norm
$$\|B\|_{N} = \sup_{\mathcal{V}_{N} \in V^{N}}\frac{\|B\mathcal{V}_{N}\|}{\|\mathcal{V}_{N}\|}.$$
For a Banach space $V$, let $Y,Z$ be two closed subspaces. The distance between two subspaces is
	$$\delta (x,Z)=\inf_{y \in Z}\|x-y\|_{V},~\delta (Y,Z)=\sup_{x \in Y}\delta(x,Z),$$
	$$\hat{\delta}(Y,Z)={\rm max}\{\delta (Y,Z),\delta (Z,Y)\}.$$
	
	Denote $V=\mathcal{H}^{1}_{\bm{P}}(\mathbb{T}^{n})$ and $V^{N}={\rm span}\{e^{i\bm{k}^T\bm{x}}:\bm{k} \in K^{n}_{N},~\bm{x} \in \mathbb{T}^{n}\}$. 
	Since $a(\mathcal{W},\mathcal{V})$ is coercive, it satisfies the Lax-Milgram lemma, and \eqref{eq:source} and \eqref{eq:source_discrete} are well-posed. 
	Then, we can define the solution operators $K:V \to V$ and $K_{N}:V^{N} \to V^{N}$ by
	\begin{equation}\label{eqn-06171}
		\begin{aligned}
			a(K\mathcal{F},\mathcal{V})&=(\mathcal{F},\mathcal{V}),\qquad \forall\,\mathcal{V} \in V, \\
			a_{p}(K_{N}\mathcal{F}_{N},\mathcal{V}_{N})&=(\mathcal{F}_{N},\mathcal{V}_{N})_{N},\qquad \forall\,\mathcal{V}_{N} \in V^{N}. 
		\end{aligned}
	\end{equation}
	And there holds
	\begin{equation*}
		\|K\mathcal{F}\|_V\lesssim\|\mathcal{F}\|,~~\|K_N\mathcal{F}\|_{V_N}\lesssim\|\mathcal{F}\|,\quad \forall\,\mathcal{F}\in\mathcal{L}^2(\mathbb{T}^n).
	\end{equation*}

	Let $G_N:V\to V_N$ be the Ritz projection as follows
	\begin{equation*}
		a(G_N\mathcal{U},\mathcal{V})=a(\mathcal{U},\mathcal{V}),\qquad \forall \mathcal{V}\in V_N.
	\end{equation*}
	Then $K_N=G_N\circ K$ and there hold
	\begin{equation*}
		\lim_{N\to\infty}\|K-K_N\|_V=0.
	\end{equation*}

	\begin{lemma}\label{lem-11121}
		$K,K_{N}$ are self-adjoint.
	\end{lemma}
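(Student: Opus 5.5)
The plan is to show self-adjointness with respect to the $\mathcal{L}^2(\mathbb{T}^n)$ inner product $(\cdot,\cdot)$ for $K$, and with respect to $(\cdot,\cdot)_N$ for $K_N$ on $V^N$. By \eqref{eqn-7} the latter coincides with $(\cdot,\cdot)$ on $V^N$. The argument uses only the definitions \eqref{eqn-06171} and the Hermitian symmetry of the bilinear forms $a$ and $a_p$.

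First I will check that $a(\cdot,\cdot)$ is Hermitian, i.e. $a(\mathcal{U},\mathcal{V})=\overline{a(\mathcal{V},\mathcal{U})}$. This holds because the parent coefficient $\mathcal{A}$ is real-valued, or, in the matrix case, real symmetric. The same computation applied to $(\cdot,\cdot)_N$ gives that $a_p$ is Hermitian on $V^N$, since the discrete sum in \eqref{eqn-07291} is again a weighted sum of $\mathcal{A}(\bm{y_j})\bm{P}\nabla\mathcal{U}_N(\bm{y_j})\cdot\overline{\bm{P}\nabla\mathcal{V}_N(\bm{y_j})}$ with real weights.

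Next, for $\mathcal{F},\mathcal{G}\in\mathcal{L}^2(\mathbb{T}^n)$, I test the defining relation of $K\mathcal{F}$ with $\mathcal{V}=K\mathcal{G}$ and the relation for $K\mathcal{G}$ with $\mathcal{V}=K\mathcal{F}$. Both choices are admissible since $K$ maps into $V$. Using Hermitian symmetry, this gives
\begin{equation*}
(\mathcal{F},K\mathcal{G})=a(K\mathcal{F},K\mathcal{G})=\overline{a(K\mathcal{G},K\mathcal{F})}=\overline{(\mathcal{G},K\mathcal{F})}=(K\mathcal{F},\mathcal{G}).
\end{equation*}
Since $\|K\mathcal{F}\|\le\|K\mathcal{F}\|_V\lesssim\|\mathcal{F}\|$, the operator $K$ is bounded on $\mathcal{L}^2$. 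Hence this identity establishes self-adjointness of $K$ and not merely symmetry. The identical argument in $V^N$ with $a_p$ and $(\cdot,\cdot)_N$ gives $(K_N\mathcal{F}_N,\mathcal{G}_N)_N=(\mathcal{F}_N,K_N\mathcal{G}_N)_N$. On the finite-dimensional space $V^N$, symmetry is the same as self-adjointness. By \eqref{eqn-7}, it also holds in $(\cdot,\cdot)$.

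The only delicate point is the Hermitian symmetry of $a_p$, because the quadrature does not commute with multiplication by $\mathcal{A}$ exactly. However, symmetry needs only that the pointwise integrand is Hermitian in $(\mathcal{U}_N,\mathcal{V}_N)$, and this holds for real $\mathcal{A}$. So no aliasing estimate is needed. I would state this realness assumption on $\alpha$, implicit in the ellipticity condition, explicitly.
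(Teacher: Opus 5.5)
Your argument is correct, but it establishes self-adjointness in a different inner product from the paper's.

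\textbf{The paper's route.} The paper treats $K$ as an operator on $V$ with the energy inner product. It writes $(K\mathcal{U},\mathcal{V})_V=a(K\mathcal{U},\mathcal{V})=(\mathcal{U},\mathcal{V})=a(\mathcal{U},K\mathcal{V})=(\mathcal{U},K\mathcal{V})_V$. That is, it tests \eqref{eqn-06171} with $\mathcal{V}$ itself, and $K_N$ is handled the same way with $a_p$.

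\textbf{Your route.} You view $K$ as a bounded operator on $\mathcal{L}^2(\mathbb{T}^n)$ and test with $K\mathcal{G}$ and $K\mathcal{F}$. This is why you need $\|K\mathcal{F}\|\le\|K\mathcal{F}\|_V\lesssim\|\mathcal{F}\|$ to upgrade symmetry to self-adjointness.

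\textbf{Common ingredients.} Both routes rest on the defining relation and the Hermitian symmetry of $a$ (resp.\ $a_p$). The paper uses the symmetry silently in the step $(\mathcal{U},\mathcal{V})=a(\mathcal{U},K\mathcal{V})$. Your explicit appeal to the realness of $\mathcal{A}$ fills that in; it follows from the realness and continuity of $\alpha$ and the density of $\bm{P}^T\bm{x}$ in $\mathbb{T}^n$.

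\textbf{What the paper's version buys.} It matches how the lemma is used later. Resolvents, spectral projections and the Babu\v{s}ka--Osborn adjoints $K^*$, $\Pi_N^*$ are all taken on $V$ with respect to $a$. Self-adjointness in $a$ is exactly what gives $E^*_\gamma=E_\gamma$ and $\gamma_N^*=\gamma_N$ in the final estimate. Your $\mathcal{L}^2$ statement does not literally supply this. The missing line, $a(K\mathcal{U},\mathcal{V})=(\mathcal{U},\mathcal{V})=\overline{a(K\mathcal{V},\mathcal{U})}=a(\mathcal{U},K\mathcal{V})$, uses nothing beyond what you already have.

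\textbf{The inner product matters.} With the literal $\mathcal{H}^1_{\bm{P}}$ inner product \eqref{eqn-07302}, $K$ is not self-adjoint whenever $\mathcal{A}$ is non-constant, because $\bm{P}\nabla\cdot\mathcal{A}\bm{P}\nabla$ does not commute with $\bm{P}\nabla\cdot\bm{P}\nabla$. So only the energy product on $V$ or the $\mathcal{L}^2$ product work, and you picked one of the valid ones.
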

	\begin{proof}
    From the definitions of operators $K$, we have
		$$(K \mathcal{U},\mathcal{V})_{V}=a(K \mathcal{U},\mathcal{V})=(\mathcal{U},\mathcal{V})=a(\mathcal{U},K \mathcal{V})=(\mathcal{U},K\mathcal{V})_{V}.$$
		The conclusions regarding $K_N$ can be proven similarly to those for $K$.
	\end{proof}
    \noindent The original eigenvalue problem is equivalent to analyzing the nonzero spectrum of $K$ and $K_{N}$ and their associated eigenfunctions.
	Specifically, suppose $\mu$ is a nonzero eigenvalue of $K$ and $\mathcal{U}$ is an eigenfunction corresponding to $\mu$ of $K$, then
	$$a(\mathcal{U},\mathcal{V}) = \frac{1}{\mu}a(K\mathcal{U},\mathcal{V}) = \frac{1}{\mu}(\mathcal{U},\mathcal{V}),\qquad \forall\,\mathcal{V} \in V.$$
	Hence, $(\mu^{-1},\mathcal{U})$ is a solution of (\ref{eqn-5}). Similarly, if $(\gamma,\mathcal{U})$ is a solution of (\ref{eqn-5}), then $(\gamma^{-1},\mathcal{U})$ is an eigenpair of $K$.

    Next, we recall the resolvent set and spectrum. We denote the resolvent set by $\rho(K)$ and the spectrum by $\sigma(K)$, \textit{i.e.}, $\sigma(K) = \mathbb{C}\backslash \rho(K)$. For any $z \in \rho(K)$, $R_{z}(K)=(zI-K)^{-1}$ denotes the resolvent operator. Let $\mu$ be a nonzero eigenvalue of $K$ with algebraic or geometric multiplicity $m$. Let $\Gamma_{\mu}$ be a circle centered at $\mu$ that lies entirely in $\rho(K)$ and encloses no other points of $\sigma(K)$. The corresponding spectral projections $E_\mu:V \to V$ and $E_{\mu,N}:V^{N} \to V^{N}$ are defined by
	$$E_{\mu}=\frac{1}{2 \pi i}\int_{\Gamma_{\mu}}R_{z}(K){\rm d}z,\quad E_{\mu,N}=\frac{1}{2 \pi i}\int_{\Gamma_{\mu}}R_{z}(K_{N}){\rm d}z.$$
	It is known that the range $R(E_{\mu})$ of $E_{\mu}$ is the eigenspace corresponding to the eigenvalue $\mu$.

    From the spectral theory for non-compact operators in \cite{descloux1978}, when $K$ is not compact, $K_N$ can still provide a compact approximation to $K$ provided that the following two conditions hold:
\begin{itemize}
   \item $\textbf{Cond 1}: \lim\limits_{N\to \infty}\|K-K_{N}\|_{N} = 0$;\\
   \item $\textbf{Cond 2}: \lim\limits_{N \to \infty}\delta(\mathcal{V},V^{N})=0,\quad\forall\,\mathcal{V} \in V.$
\end{itemize}
Actually, since $\|K-K_{N}\|_{N}$ is the PM error for the elliptic equation, the estimate in \Cref{thm-11261} guarantees that $\textbf{Cond 1}$ holds.
Meanwhile, for $\textbf{Cond 2}$, by
$$\delta(\mathcal{V},V^{N}) = \inf_{\mathcal{V}_{N} \in V^{N}}\|\mathcal{V}-\mathcal{V}_{N}\|_{1} \leq \|\mathcal{V}-\mathcal{P}_{N}\mathcal{V}\|_1,$$
we obtain convergence immediately from the error estimate for the truncated operator $\mathcal{P}_{N}$ in \Cref{lem-3}. Hence, we have the following spectral convergence of $K_N$ to $K$.

\begin{theorem}[\cite{descloux1978}, Theorem 6]\label{thm:value}
    For the solution operator $K$ of the eigenproblem \eqref{eqn-5} and the corresponding discrete operator $K_N$, we have
	$$\lim\limits_{N \to \infty}\delta(\gamma,\sigma(K_{N})) = 0,\qquad \forall\,\gamma \in \sigma(K).$$
\end{theorem}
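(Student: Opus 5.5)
The plan is to reduce the statement to the abstract non-compact approximation theory of \cite{descloux1978}. Concretely, I will verify its two hypotheses, \textbf{Cond 1} and \textbf{Cond 2}, for the pair $(K,K_N)$ on $V=\mathcal{H}^1_{\bm{P}}(\mathbb{T}^n)$. I will then show that this pair yields the absence of spectral pollution together with spectral inclusion, the latter being the property asserted in the theorem. Throughout I read $\gamma\in\sigma(K)$ and $\delta(\gamma,\sigma(K_N))=\operatorname{dist}(\gamma,\sigma(K_N))$.

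The first step is to fix the setting. By \Cref{lem-11121}, $K$ and $K_N$ are bounded and self-adjoint with respect to $a(\cdot,\cdot)$. I regard $K_N$ as an operator on $V$ by setting $K_N=G_NK$, which vanishes on $V^{N\perp_a}$. This makes both spectra subsets of $\mathbb{R}$, so the resolvent norms are controlled by distances to the spectrum: $\|R_z(K)\|=1/\operatorname{dist}(z,\sigma(K))$, and similarly for $K_N$.

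The core argument is by contradiction. Suppose some $\gamma\in\sigma(K)$ and some $\varepsilon>0$ satisfy $\operatorname{dist}(\gamma,\sigma(K_N))\ge\varepsilon$ along a subsequence. Self-adjointness then gives $\|R_\gamma(K_N)\|\le 1/\varepsilon$ uniformly in $N$. Since $K$ is self-adjoint, $\gamma\in\sigma(K)$ admits a Weyl sequence, so there is a unit $\mathcal{V}\in V$ with $\|(\gamma-K)\mathcal{V}\|\le\eta$ for any prescribed $\eta$. I use \textbf{Cond 2} (from \Cref{lem-3}) to replace $\mathcal{V}$ by $\mathcal{V}_N=G_N\mathcal{V}\in V^N$, which is Céa-quasi-optimal, with $\|\mathcal{V}-\mathcal{V}_N\|\to0$. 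I use \textbf{Cond 1} (from \Cref{thm-11261}) to get $\|(K-K_N)\mathcal{V}_N\|\le\|K-K_N\|_N\|\mathcal{V}_N\|\to0$. Combining these,
\begin{equation*}
1\approx\|\mathcal{V}_N\|\le\|R_\gamma(K_N)\|\,\|(\gamma-K_N)\mathcal{V}_N\|\le\varepsilon^{-1}\bigl(\eta+\|K\|\,\|\mathcal{V}-\mathcal{V}_N\|+\|K-K_N\|_N+|\gamma|\,\|\mathcal{V}-\mathcal{V}_N\|\bigr).
\end{equation*}
Letting $N\to\infty$ and then $\eta\to0$ gives a contradiction, which proves the claimed limit. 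Before writing this out I must check that $R_\gamma(K_N)$ acts on $V^N$ compatibly. This is exactly where I restrict $K_N$ to $V^N$ and note that $\gamma\neq0$ handles the trivial kernel part.

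The main obstacle is the Weyl-sequence step, because $K$ is not compact and $\gamma$ may lie in the continuous spectrum. There I cannot use eigenfunctions and must rely on self-adjointness to guarantee approximate eigenvectors. A related difficulty is the uniformity in \textbf{Cond 1}: \Cref{thm-11261} is stated for regular data, whereas the argument needs $\|K-K_N\|_N\to0$ on all of $V^N$. To cover this I would apply Theorem 6 of \cite{descloux1978} directly. That theorem asserts spectral inclusion under exactly \textbf{Cond 1} and \textbf{Cond 2}, and I would treat the regularity of $K\mathcal{V}_N$ as an assumption inherited from the hypotheses on $\mathcal{A}$.
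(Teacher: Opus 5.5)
Your skeleton matches the paper's. The paper checks \textbf{Cond 1} via \Cref{thm-11261} and \textbf{Cond 2} via \Cref{lem-3}, and then cites \cite[Theorem 6]{descloux1978}. Your self-adjoint Weyl-sequence argument is a more self-contained route. As written, however, it has a gap exactly where you suspected, and your patch does not close it.

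The step $\|(K-K_N)\mathcal{V}_N\|\le\|K-K_N\|_N\|\mathcal{V}_N\|\to0$ needs \textbf{Cond 1}, and no smoothness of $\mathcal{A}$ delivers this through \Cref{thm-11261}. Since $\ker\bm{P}\neq\{0\}$ for $n>d$, for every $N$ there are $\bm{k}\in K_N^n$ with $\max_j|k_j|\approx N/2$ but $|\bm{P}\bm{k}|=O(1)$. For $\mathcal{V}_N=e^{i\bm{k}^T\bm{y}}$ one has $\|\mathcal{V}_N\|_1=O(1)$. Write $\|\cdot\|_a:=a(\cdot,\cdot)^{1/2}$. The $\bm{k}$-th Fourier coefficient of $K\mathcal{V}_N$ equals $\|K\mathcal{V}_N\|_a^2\ge\|\mathcal{V}_N\|_a^{-2}\gtrsim1$. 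Hence $\|K\mathcal{V}_N\|_{\mathcal{H}^s(\mathbb{T}^n)}\gtrsim N^s$, and the right-hand side of the lemma is $\gtrsim N$.

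In fact \textbf{Cond 1} is false in general. Take $\bm{P}=(1,\beta)$ with $0<\beta<1$, $\mathcal{A}=1+\epsilon\cos y_2$, and such a $\bm{k}$ with $k_2=N/2-1$ and $\bm{P}\bm{k}\in[\tfrac12,\tfrac32]$. A first-order perturbation computation in $\epsilon$ shows that the $e^{i(\bm{k}+\bm{e}_2)^T\bm{y}}$-component of $K\mathcal{V}_N$ has $\|\cdot\|_1$-norm at least $c\epsilon-O(\epsilon^2)$, with $c=c(\beta)>0$ independent of $N$. Since $\bm{k}+\bm{e}_2\notin K_N^n$, this gives $\delta(K\mathcal{V}_N,V^N)\gtrsim\epsilon$ for small $\epsilon$. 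So $\|K-K_N\|_N\not\to0$ for any $K_N$ with range in $V^N$. Consequently you cannot treat the regularity of $K\mathcal{V}_N$ as an assumption. Citing \cite[Theorem 6]{descloux1978} ``under exactly \textbf{Cond 1} and \textbf{Cond 2}'' inherits the same problem, and the paper's own appeal to \Cref{thm-11261} has the same defect.

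The missing observation is that your argument never needs \textbf{Cond 1}. Take $K_N=G_NK|_{V^N}$ (the identification you and the paper use) and $\mathcal{V}_N=G_N\mathcal{V}$ for your Weyl vector $\mathcal{V}$, with $\|\mathcal{V}\|_a=1$ and $\|(\gamma-K)\mathcal{V}\|_a\le\eta$. There is the exact identity
\begin{equation*}
(\gamma-K_N)G_N\mathcal{V}=G_N(\gamma-K)\mathcal{V}+G_NK(I-G_N)\mathcal{V}.
\end{equation*}
Since $G_N$ is $a$-orthogonal, $\|(\gamma-K_N)G_N\mathcal{V}\|_a\le\eta+\|K\|\,\|(I-G_N)\mathcal{V}\|_a=\eta+o(1)$, by \textbf{Cond 2} alone. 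Self-adjointness of $K_N$ on $(V^N,a)$ then gives $\operatorname{dist}(\gamma,\sigma(K_N))\le\|(\gamma-K_N)G_N\mathcal{V}\|_a/\|G_N\mathcal{V}\|_a\le2\eta$ for $N$ large. This is direct, with no contradiction needed. Taking the Weyl vector to be a trigonometric polynomial even removes the second term for large $N$.

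This is what your route buys over the paper's citation: spectral inclusion for this self-adjoint Galerkin scheme is a pure strong-convergence fact. \textbf{Cond 1} is the kind of hypothesis used for the complementary no-pollution statement (\Cref{cor:des-thm1}), not for this theorem.

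Minor points:
\begin{itemize}
\item The Weyl step is no obstacle, since $\sigma(K)=\sigma_{\rm ap}(K)$ for bounded self-adjoint $K$.
\item Do not exclude $\gamma=0$. It lies in $\sigma(K)$ (test $K$ on $e^{i\bm{k}^T\bm{y}}$ with $|\bm{P}\bm{k}|\to\infty$), and working on $V^N$ covers it unchanged.
\item \textbf{Cond 2} for arbitrary $\mathcal{V}\in V$ follows from the tail of $\sum_{\bm{k}}(1+|\bm{P}\bm{k}|^2)|\hat{\mathcal{V}}_{\bm{k}}|^2$. It does not follow from \Cref{lem-3}, which assumes $\mathcal{H}^r(\mathbb{T}^n)$ regularity.
\end{itemize}
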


We now state two important results that follow from $\textbf{Cond 1}$. 
\begin{corollary}\label{cor-06231}
   When the property $\textbf{Cond 1}$ holds, there exists a constant $C$, independent of $N$, such that for $N$ sufficiently large,
    $$\|R_{z}(K_{N})\|_{N} \leq C,\qquad \forall\,z \in \rho(K).$$
\end{corollary}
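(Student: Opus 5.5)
The plan is a Neumann-series perturbation argument around the exact resolvent, with \textbf{Cond 1} as the small perturbation. As stated, the bound cannot hold literally for all $z\in\rho(K)$: $\|R_z(K)\|$ blows up as $z$ approaches $\sigma(K)$. So I will prove it uniformly for $z$ in a fixed compact set $F\subset\rho(K)$, such as the contour $\Gamma_\mu$ used to define $E_{\mu,N}$, which is how it is used afterwards. The first step is to note that $z\mapsto\|R_z(K)\|_V$ is continuous on $\rho(K)$, so $M_F:=\sup_{z\in F}\|R_z(K)\|_V<\infty$.

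The second step rewrites the discrete resolvent. For $z\in F$ and $\mathcal{V}_N\in V^N$,
$$zI-K_N=(zI-K)\bigl(I-R_z(K)(K_N-K)\bigr)$$
when applied to elements of $V^N$. Since $\|R_z(K)(K_N-K)\|_N\le M_F\|K-K_N\|_N$, \textbf{Cond 1} gives $N_0$ such that this quantity is at most $1/2$ for all $N\ge N_0$, uniformly in $z\in F$.

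The main obstacle is that $I-R_z(K)(K_N-K)$ acts from $V^N$ into $V$, not into $V^N$, so a plain Neumann series in the $\|\cdot\|_N$ norm is not immediately available. To get around this, I will use that $K_N=G_NK$ maps $V^N$ into itself, and write the equation $(zI-K_N)\mathcal{U}_N=\mathcal{F}_N$ on $V^N$ as $z\mathcal{U}_N-G_NK\mathcal{U}_N=\mathcal{F}_N$. This yields a lower bound
$$\|(zI-K_N)\mathcal{U}_N\|\ \ge\ \|(zI-K)\mathcal{U}_N\|-\|(K-K_N)\mathcal{U}_N\|\ \ge\ \bigl(M_F^{-1}-\|K-K_N\|_N\bigr)\|\mathcal{U}_N\|\ \ge\ \tfrac{1}{2M_F}\|\mathcal{U}_N\|,$$
where the middle inequality uses $\|\mathcal{U}\|\le M_F\|(zI-K)\mathcal{U}\|$.

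Because $V^N$ is finite dimensional, this lower bound shows that $zI-K_N$ is injective, hence bijective on $V^N$. So $z\in\rho(K_N)$ and $\|R_z(K_N)\|_N\le 2M_F=:C$ for all $N\ge N_0$ and all $z\in F$. The constant depends only on $F$, through $M_F$, and not on $N$. Applying this with $F=\Gamma_\mu$ gives the form needed for the spectral projections $E_{\mu,N}$.
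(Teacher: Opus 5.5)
Your argument is essentially the paper's: the lower bound $\|(zI-K_N)\mathcal{U}_N\|\ge\|(zI-K)\mathcal{U}_N\|-\|(K-K_N)\mathcal{U}_N\|$ on $V^N$, combined with finite dimensionality of $V^N$, gives invertibility of $zI-K_N$ and the bound on $R_z(K_N)$; the factorization/Neumann-series detour is not needed. Restricting to a compact $F\subset\rho(K)$ is a correct and necessary repair, not a weakening. The paper's first step asserts a single constant with $\|(zI-K)\mathcal{U}\|\ge 2C\|\mathcal{U}\|$ for all $z\in\rho(K)$, which fails as $z$ approaches $\sigma(K)$, and $R_z(K_N)$ need not even exist at eigenvalues of $K_N$ lying in $\rho(K)$. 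Uniformity on the contour $\Gamma$ is all that the later estimate of $\|E-E_N\|_N$ uses.
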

\begin{proof}
   There exists $C>0$ such that
$$\|(zI-K)\mathcal{U}\| \geq 2C\|\mathcal{U}\|,\quad \forall\,\mathcal{U} \in V,~ z \in \rho(K).$$
By $\textbf{Cond 1},$ for $N$ large enough, we have 
$$\|(K-K_{N})\mathcal{U}\| \leq C\|\mathcal{U}\|,\quad \forall\,\mathcal{U} \in V^{N}.$$ 
Then we obtain for $\mathcal{U} \in V^{N},z \in \rho(K)$
$$\|(zI-K_{N})\mathcal{U}\| \geq \|(zI-K)\mathcal{U}\|-\|(K-K_{N})\mathcal{U}\| \geq C\|\mathcal{U}\|,$$
since $K_{N}$ is finite dimensional. This result also proves the existence of $R_{z}(K_{N})$.
\end{proof}
\begin{corollary}[\cite{descloux1978}, Theorem 1]\label{cor:des-thm1}
  When the property $\textbf{Cond 1}$ holds and $\Omega$ is an open set containing $\sigma(K)$, there exists $N_{0} > 0$ such that $\sigma(K_{N}) \subset \Omega,~ \forall\,N > N_{0}$.
\end{corollary}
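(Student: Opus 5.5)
The statement is \Cref{cor:des-thm1}: under \textbf{Cond 1}, $\sigma(K_N)\subset\Omega$ for all sufficiently large $N$. I will argue by a uniform resolvent bound on the complement of $\Omega$, following the argument of \Cref{cor-06231} but making the constant uniform over a set of $z$.

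The first step is a compactness reduction. $K$ is bounded and $\|K_N\|_N\lesssim 1$ uniformly in $N$, so there is $M$ with $\sigma(K)\cup\sigma(K_N)\subset\{|z|\le M\}$ for all $N$. For $|z|>M$ the resolvent $R_z(K_N)$ exists by a Neumann series. Hence it suffices to exclude $\sigma(K_N)$ from the set
$$F:=\{|z|\le M\}\setminus\Omega,$$
which is compact and lies in $\rho(K)$.

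The second step is a uniform lower bound for $zI-K$ on $F$. The map $z\mapsto\|R_z(K)\|$ is continuous on $\rho(K)$, so it attains a maximum on the compact set $F$. This yields $c>0$ with
$$\|(zI-K)\mathcal{U}\|\ge 2c\|\mathcal{U}\|,\qquad \forall\,\mathcal{U}\in V,\ z\in F.$$
This is exactly where compactness of $F$ makes the constant in \Cref{cor-06231} uniform in $z$.

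The third step is the perturbation argument. By \textbf{Cond 1}, choose $N_0$ such that $\|K-K_N\|_N\le c$ for $N>N_0$. Then for every $\mathcal{U}\in V^N$ and $z\in F$,
$$\|(zI-K_N)\mathcal{U}\|\ge\|(zI-K)\mathcal{U}\|-\|(K-K_N)\mathcal{U}\|\ge c\|\mathcal{U}\|.$$
Since $V^N$ is finite dimensional, an injective map $zI-K_N$ on $V^N$ is invertible. Therefore $z\in\rho(K_N)$, so $\sigma(K_N)\cap F=\emptyset$, which gives $\sigma(K_N)\subset\Omega$.

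The main obstacle is the second step, together with the bookkeeping needed to make the argument legitimate. Restricting $zI-K$ to $V^N$ only uses the inequality on a subspace, which is harmless. The points that need care are the uniform bound on $\|K_N\|$ (which follows from the stability bound $\|K_N\mathcal{F}\|\lesssim\|\mathcal{F}\|$) and the fact that the lower bound on $F$ does not degenerate. The latter holds because $\operatorname{dist}(F,\sigma(K))>0$ and $R_z(K)$ is continuous in $z$.
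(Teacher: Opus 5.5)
Your proof is correct and is essentially the argument behind the cited theorem of \cite{descloux1978}, which is also the perturbation estimate in \Cref{cor-06231}: a uniform bound on $\|R_z(K)\|$ away from $\sigma(K)$, combined with \textbf{Cond 1} and the finite dimensionality of $V^N$. Your compactness step is what makes the constant uniform in $z$ on the set that matters. This step is needed, because \Cref{cor-06231} as stated claims a constant uniform over all of $\rho(K)$, which is false near $\sigma(K)$. Alternatively, you could skip the Neumann-series step by using $\sup_{z\in\mathbb{C}\setminus\Omega}\|R_z(K)\|<\infty$, which holds because the resolvent decays at infinity.
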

\noindent An immediate implication of \Cref{cor:des-thm1} is that the proposed method does not generate spurious modes with eigenvalues mingled among those of physical relevance. In particular, such spectral pollution could, in principle, arise from the infinite-dimensional spectrum of the solution operator $K$.

\begin{theorem}\label{thm:function}
  For the eigenspace $E(V)$ of the problem \eqref{eqn-5} and the corresponding discrete eigenspace $E_N(V_N)$, we have
$$\lim\limits_{N \to \infty}\hat{\delta}(E(V),E_{N}(V^{N})) = 0.$$
\end{theorem}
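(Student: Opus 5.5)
We will prove convergence of the spectral projections in operator norm on $V^N$, and from it derive convergence of the gap $\hat\delta$ between $E(V)=R(E_\mu)$ and $E_N(V^N)=R(E_{\mu,N})$. This follows the Descloux--Nassif--Rappaz route combined with the Babu\v{s}ka--Osborn framework.

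\textbf{Step 1: projection estimate on $V^N$.} For $\mathcal{V}_N\in V^N$ we use the resolvent identity $R_z(K)-R_z(K_N)=R_z(K)(K-K_N)R_z(K_N)$ on $\Gamma_\mu$. This gives
\begin{equation*}
\|(E_\mu-E_{\mu,N})\mathcal{V}_N\|\le \frac{|\Gamma_\mu|}{2\pi}\sup_{z\in\Gamma_\mu}\|R_z(K)\|\,\|K-K_N\|_N\,\|R_z(K_N)\|_N\,\|\mathcal{V}_N\|.
\end{equation*}
The factor $\|R_z(K)\|$ is uniformly bounded on the compact curve $\Gamma_\mu\subset\rho(K)$. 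The factor $\|R_z(K_N)\|_N$ is uniformly bounded by \Cref{cor-06231}. Therefore \textbf{Cond 1} yields $\|E_\mu-E_{\mu,N}\|_N\to0$. By \Cref{cor:des-thm1}, $\Gamma_\mu\subset\rho(K_N)$ for large $N$, so $E_{\mu,N}$ is well defined.

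\textbf{Step 2: $\delta(E_N(V^N),E(V))\to0$.} Take $\mathcal{V}_N\in R(E_{\mu,N})$ with $\|\mathcal{V}_N\|=1$. Since $E_{\mu,N}\mathcal{V}_N=\mathcal{V}_N$, we have
\begin{equation*}
\delta(\mathcal{V}_N,R(E_\mu))\le\|\mathcal{V}_N-E_\mu\mathcal{V}_N\|=\|(E_{\mu,N}-E_\mu)\mathcal{V}_N\|\le\|E_\mu-E_{\mu,N}\|_N.
\end{equation*}
This tends to zero by Step 1.

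\textbf{Step 3: $\delta(E(V),E_N(V^N))\to0$.} This is the main obstacle, because $E_{\mu,N}$ only acts on $V^N$ and Step 1 gives no control on $E(V)$ directly. The plan is to use that $R(E_\mu)$ is finite dimensional, since $\mu$ has finite multiplicity $m$. Take $\mathcal{U}\in R(E_\mu)$ with $\|\mathcal{U}\|=1$, and let $\mathcal{U}_N=\mathcal{P}_N\mathcal{U}\in V^N$, so that $\|\mathcal{U}-\mathcal{U}_N\|\to0$ by \textbf{Cond 2} and \Cref{lem-3}. Then
\begin{equation*}
\|\mathcal{U}-E_{\mu,N}\mathcal{U}_N\|\le\|E_\mu(\mathcal{U}-\mathcal{U}_N)\|+\|(E_\mu-E_{\mu,N})\mathcal{U}_N\|\lesssim\|\mathcal{U}-\mathcal{U}_N\|+\|E_\mu-E_{\mu,N}\|_N,
\end{equation*}
and $E_{\mu,N}\mathcal{U}_N\in R(E_{\mu,N})$. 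Convergence is uniform over the unit sphere of $R(E_\mu)$. To see this, fix a basis of that sphere's span; the pointwise convergence from \textbf{Cond 2} then becomes uniform on the compact unit ball of the finite-dimensional space.

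If instead one wants $\mu$ to be any isolated point of the spectrum with possibly infinite-dimensional range, finite dimensionality is no longer available. In that case we would replace this step by the argument of \cite{descloux1978}, Theorem 3, and obtain the weaker but still sufficient pointwise statement. Combining Steps 2 and 3 gives $\hat\delta(E(V),E_N(V^N))\to0$.
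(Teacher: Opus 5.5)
Your proposal is correct and is essentially the paper's argument. It uses the resolvent-identity bound giving $\|E-E_N\|_N\to0$ from \textbf{Cond 1} and \Cref{cor-06231}, the resulting one-sided gap $\delta(E_N(V^N),E(V))\to0$, and the estimate $\|\mathcal{V}-E_N\mathcal{V}_N\|\le\|E\|\,\|\mathcal{V}-\mathcal{V}_N\|+\|E-E_N\|_N\|\mathcal{V}_N\|$, made uniform via finite multiplicity (which you justify more explicitly than the paper does).

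One caveat: drop your closing remark on the infinite-multiplicity case, since pointwise convergence cannot give $\hat{\delta}\to0$ there. Indeed $\delta(E(V),E_N(V^N))=1$ whenever $E(V)$ is infinite-dimensional, because $E_N(V^N)\subset V^N$ is finite-dimensional, and the paper likewise only obtains $m_N\to\infty$ in that case.
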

\begin{proof}
  For $N \in \mathbb{N}^+$,
   \begin{equation}\label{eqn-07051}
      \begin{aligned}
          \|E-E_{N}\|_{N} &\leq \frac{1}{2 \pi}\int_{\Gamma}\|R_{z}(K)-R_{z}(K_{N})\|_{N}|{\rm d}z|\\
&=\frac{1}{2 \pi}\int_{\Gamma}\|R_{z}(K)(K-K_{N})R_{z}(K_{N})\|_{N}|{\rm d}z|\\
& \leq \frac{1}{2 \pi}\int_{\Gamma}\|R_{z}(K)\|_{N}\|K-K_{N}\|_{N}\|R_{z}(K_{N})\|_{N}|{\rm d}z|.
      \end{aligned}
   \end{equation}
From $\textbf{Cond 1}$ and \Cref{cor-06231}, we get 
$$\lim\limits_{N \to \infty}\|E-E_{N}\|_{N} = 0,$$
which means that
\begin{equation}\label{eqn-07052}
\lim\limits_{N \to \infty}\delta(E_{N}(V^{N}),E(V)) = 0.
\end{equation}

Furthermore, for all $\mathcal{V} \in E(V)$, by $\textbf{Cond 2}$ there exists $\mathcal{V}_{N} \in V^{N}$ such that $\lim\limits_{N \to \infty}\|\mathcal{V}-\mathcal{V}_{N}\| = 0$. Then 
\begin{equation}
  \begin{aligned}
      \|\mathcal{V}-E_{N}\mathcal{V}_{N}\| &= \|E\mathcal{V}-E_{N}\mathcal{V}_{N}\|\\
&\leq \|E(\mathcal{V}-\mathcal{V}_{N})\|+\|(E-E_{N})\mathcal{V}_{N}\|\\
&\leq \|E\| ~\|\mathcal{V}-\mathcal{V}_{N}\|+\|E-E_{N}\|_{N}\|\mathcal{V}_{N}\|.\nonumber
  \end{aligned}
\end{equation}
By (\ref{eqn-07051}) and the continuity of $E$, we obtain 
\begin{equation}\label{eqn-07053}
\lim\limits_{N \to \infty}\delta(\mathcal{V},E_{N}(V^{N})) = 0.
\end{equation}
Let $m$ and $m_{N}$ be the dimensions of $E(V)$ and of $E_{N}(V^{N})$, respectively. \Cref{eqn-07053} shows that if $m = \infty$ then $\lim\limits_{N \to \infty}m_{N} = \infty$. If $m<\infty$ then $$\lim\limits_{N \to \infty}\delta(E(V),E_{N}(V^{N})) = 0.$$ Combining this with (\ref{eqn-07052}), we obtain $m = m_{N}$ for sufficiently large $N$ and
$$\lim\limits_{N \to \infty}\hat{\delta}(E(V),E_{N}(V^{N})) = 0.$$
\end{proof}

\subsection{Error estimates}
Let $\gamma \in \mathbb{C}$ be an isolated eigenvalue of the solution operator $K$ with finite algebraic multiplicity $m$. 
The coercivity of the bilinear form $a(\cdot,\cdot)$ implies $\gamma \neq 0$. 
Consequently, there exists a closed disk $\mathcal{B}_\gamma \subset \mathbb{C} \backslash \{0\}$ centered at $\gamma$ such that $\mathcal{B}_\gamma \cap \sigma(K)=\{\gamma\}$. Let $\{\gamma_{i,N}\}^{m_{N}}_{i=1}$ denote eigenvalues of the discrete solution operator $K_{N}$ contained in $\mathcal{B}_\gamma$. Subsection 3.3 establishes that
\begin{itemize}
   \item $~ m_N = m$ for sufficiently large $N$;
   \item $\lim\limits_{N \to \infty}\gamma_{i,N} = \gamma,~ i = 1,\cdots,m$.
\end{itemize}
Building upon these results, we now derive error estimates for the eigenvalues $\gamma_{i,N}$ and their associated eigenfunctions. 

	\begin{theorem}
		Assume that $\gamma$ is an eigenvalue of \eqref{eqn-5} with algebraic multiplicity $m$. Then, for $N$ sufficiently large, there are $m$ numerical eigenvalues $\{\gamma_{j,N}\}^{m}_{j=1}$ such that
		$$\left| \gamma - \gamma_{j,N} \right| \leq C(1+N^{2})^{-(s-1)},\qquad j=1,\cdots,m.$$
	\end{theorem}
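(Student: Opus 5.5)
The plan is to follow the Babu\v{s}ka--Osborn framework for self-adjoint operators, with the compactness hypothesis replaced by \textbf{Cond 1} and \textbf{Cond 2}. The target is the standard double-order estimate
$$|\gamma-\gamma_{j,N}| \lesssim \sum_{i,l=1}^m |((K-K_N)\phi_i,\phi_l)_V| + \|(K-K_N)|_{R(E)}\|_V^2 ,$$
where $\{\phi_i\}_{i=1}^m$ is a $V$-orthonormal basis of the eigenspace $R(E_\mu)$. Everything is first carried out for the eigenvalue $\mu=\gamma^{-1}$ of $K$. Since $\gamma$ is bounded away from $0$ by coercivity, the bound transfers to $\gamma$ itself with a constant depending only on $\gamma$.

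\textbf{Step 1 (Babu\v{s}ka--Osborn estimate without compactness).} Let $\hat\mu_N=\frac1m\sum_j\mu_{j,N}$. On the $m$-dimensional space $R(E)$, consider the restriction $\hat K_N:=E_N K_N|_{R(E)}$. By \Cref{thm:function}, $E_N|_{R(E)}:R(E)\to R(E_N)$ is an isomorphism for large $N$, with inverse uniformly bounded. Hence $\hat K_N$ is similar to $K_N|_{R(E_N)}$ and has the eigenvalues $\mu_{j,N}$. Writing
$$\mu-\hat\mu_N=\tfrac1m\operatorname{tr}\big(E(K-K_N)|_{R(E)}\big)+O\big(\|(K-K_N)|_{R(E)}\|\,\|(E-E_N)|_{R(E)}\|\big),$$
and using self-adjointness (\Cref{lem-11121}), I will get the bound above. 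The resolvent bound needed for $\|(E-E_N)|_{R(E)}\|\lesssim\|(K-K_N)|_{R(E)}\|$ comes from \Cref{cor-06231}, applied uniformly on the contour $\Gamma_\mu$. Only the restriction to the finite-dimensional space $R(E)$ is used, so no compactness of $K$ is needed. For the individual $\mu_{j,N}$ rather than their mean, I will use that in the self-adjoint case $\hat K_N$ is close to a Hermitian matrix. This gives $\max_j|\mu-\mu_{j,N}|$ with the same bound.

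\textbf{Step 2 (reduce to Galerkin orthogonality).} I will treat $K_N=G_NK$ with the exact form $a$; the quadrature difference $a_p-a$ will be absorbed in the same way as in \Cref{thm-11261}. For the eigenfunctions $\phi_i$ we have $K\phi_i=\mu\phi_i$. Then
$$((K-K_N)\phi_i,\phi_l)_V=a((I-G_N)K\phi_i,\phi_l)=\mu^{-1}a((I-G_N)K\phi_i,(I-G_N)K\phi_l),$$
by Galerkin orthogonality and the symmetry of $a$. Therefore both terms in the target estimate are bounded by $\max_i\|(I-G_N)\phi_i\|_1^2$.

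\textbf{Step 3 (rates).} Assume, as in \Cref{thm-11261}, that the parent eigenfunctions satisfy $\phi_i\in\mathcal H^s(\mathbb T^n)$. That lemma, or \Cref{lem-3} combined with C\'ea's lemma, gives $\|(I-G_N)\phi_i\|_1\lesssim(1+N^2)^{-(s-1)/2}\|\phi_i\|_{\mathcal H^s}$. Squaring this yields $(1+N^2)^{-(s-1)}$, which is the claimed rate.

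The main obstacle is Step 1: making the Babu\v{s}ka--Osborn argument rigorous when $K$ is not compact. Specifically, I need to show that $E_N|_{R(E)}$ is uniformly invertible and that the convergence $\|E-E_N\|$ restricted to $R(E)$ holds in operator norm, not merely pointwise. I plan to obtain both from \textbf{Cond 1} together with \Cref{cor-06231}, using that $R(E)$ is finite-dimensional. A secondary issue is controlling the quadrature consistency error, so that it is also of order $(1+N^2)^{-(s-1)}$ after pairing with $\phi_l$.
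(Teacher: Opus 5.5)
Your plan is correct within the paper's framework, but it takes a genuinely different route from the paper's proof. Your plan is the Babu\v{s}ka--Osborn trace argument, in the form Descloux--Nassif--Rappaz use for non-compact operators.

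\textbf{The paper's route.} The paper's proof does not use spectral projections to get the rate. For an arbitrary $\mathcal{V}_N\in V^N$ it writes the energy identity
\begin{align*}
\gamma-\gamma_N={}&a(\mathcal{U},\mathcal{U})-a_p(\mathcal{V}_N,\mathcal{V}_N)+a_p(\mathcal{U}_N-\mathcal{V}_N,\mathcal{U}_N-\mathcal{V}_N)\\
&-\gamma_N(\mathcal{U}_N-\mathcal{V}_N,\mathcal{U}_N-\mathcal{V}_N)_N-\gamma_N\bigl((\mathcal{U},\mathcal{U})-(\mathcal{V}_N,\mathcal{V}_N)_N\bigr).
\end{align*}
It then takes $\mathcal{V}_N=\mathcal{P}_N\mathcal{U}$ and bounds every term by the squared truncation error of \Cref{lem-3} plus the squared eigenfunction error $\|\mathcal{U}_{j,N}-\mathcal{U}_j\|_1^2$. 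This is elementary and is written directly in terms of $a_p$ and $(\cdot,\cdot)_N$. However, it consumes an $\mathcal{H}^1_{\bm{P}}$ rate for the eigenfunctions, and for $m>1$ a matching of each $\mathcal{U}_{j,N}$ with an exact eigenfunction. The paper only supplies that rate afterwards, from the Descloux--Nassif--Rappaz gap estimate.

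\textbf{Your route.} You obtain the double order intrinsically, from Galerkin orthogonality and C\'ea's lemma. You need no a priori eigenfunction rate, and you handle the multiplicity and the individual eigenvalues systematically. The price is that everything rests on the exact relation $K_N=G_NK$. The quadrature in $a_p$ destroys Galerkin orthogonality, so, as you note, you must show separately that it contributes only at second order.

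\textbf{Details to fix in Step 1.}
\begin{enumerate}
\item Define $\hat K_N=(E_N|_{R(E)})^{-1}K_NE_N|_{R(E)}$, so that it acts on $R(E)$ rather than mapping $R(E)$ into $R(E_N)$.
\item \Cref{cor-06231} bounds $R_z(K_N)$ only on $V^N$. You apply the resolvent to $(K-K_N)\phi$ with $\phi\in R(E)\not\subset V^N$, so you need $K_N$ extended to all of $V$. For example, $R_z(G_NK)=z^{-1}\bigl(I+R_z(K_N)G_NK\bigr)$, which is uniformly bounded on $\Gamma_\mu$ because $0\notin\Gamma_\mu$.
\item The remainder in the trace identity is $(E_N|_{R(E)})^{-1}(E_N-E)(I-E)(K-K_N)|_{R(E)}$. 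Here $E_N-E$ acts on vectors outside $R(E)$, and for non-compact $K$ there is no general reason for $\|E-E_N\|_V$ to be small. To bound the remainder by $\|(K-K_N)|_{R(E)}\|\,\|(E-E_N)|_{R(E)}\|$, do one of the following:
\begin{itemize}
\item pair with a basis of $R(E^*)$ and move $E-E_N$ to the adjoint side, as Babu\v{s}ka--Osborn do; or
\item use the $a$-self-adjoint extension $\Pi_NK\Pi_N$. Its spectral projection is $a$-orthogonal, so once the ranks agree, $\|E-E_N\|_V$ equals the gap between the ranges and coincides with $\|(E-E_N)|_{R(E)}\|$. With this extension the matrix entries in Step 2 become $(\phi_i,\phi_l)-(G_N\phi_i,G_N\phi_l)$, which is still second order by the same orthogonality argument.
\end{itemize}
\item For individual eigenvalues you do not need a ``nearly Hermitian'' argument. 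Since $K$ is self-adjoint, $\hat K=\mu I$ on $R(E)$. Hence each $\mu-\mu_{j,N}$ is an eigenvalue of $\hat K-\hat K_N$ and is bounded by its matrix entries.
\end{enumerate}
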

	\begin{proof}
		Assume that $\{\mathcal{U}_{j}\}^{m}_{j=1}$ and $\{\mathcal{U}_{j,N}\}^{m}_{j=1}$ are eigenfunctions corresponding to $\gamma$ and $\{\gamma_{j,N}\}^{m}_{j=1}$, respectively. 
        For any $\mathcal{V}_{N} \in V^{N}$,
			\begin{equation}
					\begin{aligned}
							&\gamma_{N}(\mathcal{U}_{N}-\mathcal{V}_{N},\mathcal{U}_{N}-\mathcal{V}_{N})_{N}+\gamma_{N}((\mathcal{U},\mathcal{U})-(\mathcal{V}_{N},\mathcal{V}_{N})_{N}) \\
							=&\gamma_{N}(\mathcal{U}_{N}-\mathcal{V}_{N},\mathcal{U}_{N}-\mathcal{V}_{N})_{N}+\gamma_{N}((\mathcal{U}_{N},\mathcal{U}_{N})_{N}-(\mathcal{V}_{N},\mathcal{V}_{N})_{N}) \\
							=&2 \gamma_{N}(\mathcal{U}_{N},\mathcal{U}_{N})_{N}-2\gamma_{N}(\mathcal{U}_{N},\mathcal{V}_{N})_{N} \\
							=&2 \gamma_{N}(\mathcal{U}_{N},\mathcal{U}_{N})_{N}-2a_{p}(\mathcal{U}_{N},\mathcal{V}_{N}), \nonumber
						\end{aligned}
				\end{equation}
			then
	\begin{equation}
		\begin{aligned}
			\gamma - \gamma_{N}=&\gamma (\mathcal{U},\mathcal{U})+\gamma_{N}(\mathcal{U}_{N},\mathcal{U}_{N})_{N}-2\gamma_{N}(\mathcal{U}_{N},\mathcal{U}_{N})_{N} \\
			=&a(\mathcal{U},\mathcal{U})+a_{p}(\mathcal{U}_{N},\mathcal{U}_{N})-2a_{p}(\mathcal{U}_{N},\mathcal{V}_{N}) \\
			&-\gamma_{N}(\mathcal{U}_{N}-\mathcal{V}_{N},\mathcal{U}_{N}-\mathcal{V}_{N})_{N}-\gamma_{N}((\mathcal{U},\mathcal{U})-(\mathcal{V}_{N},\mathcal{V}_{N})_{N})\\
			=&a(\mathcal{U},\mathcal{U})-a_{p}(\mathcal{V}_{N},\mathcal{V}_{N})+a_{p}(\mathcal{U}_{N}-\mathcal{V}_{N},\mathcal{U}_{N}-\mathcal{V}_{N}) \\
			&-\gamma_{N}(\mathcal{U}_{N}-\mathcal{V}_{N},\mathcal{U}_{N}-\mathcal{V}_{N})_{N}-\gamma_{N}((\mathcal{U},\mathcal{U})-(\mathcal{V}_{N},\mathcal{V}_{N})_{N}).\nonumber
		\end{aligned}
	\end{equation}
	Let $\mathcal{V}_{N}=\mathcal{P}_{N}\mathcal{U}$, then we have
    \begin{equation*}\label{eq:lambda}
			\begin{aligned}
				\gamma - \gamma_{N}=&a(\mathcal{U},\mathcal{U})-a_{p}(\mathcal{P}_{N}\mathcal{U},\mathcal{P}_{N}\mathcal{U})+a_{p}(\mathcal{U}_{N}-\mathcal{P}_{N}\mathcal{U},\mathcal{U}_{N}-\mathcal{P}_{N}\mathcal{U}) \\
				-&\gamma_{N}(\mathcal{U}_{N}-\mathcal{P}_{N}\mathcal{U},\mathcal{U}_{N}-\mathcal{P}_{N}\mathcal{U})_{N}-\gamma_{N}\left((\mathcal{U},\mathcal{U})-(\mathcal{P}_{N}\mathcal{U},\mathcal{P}_{N}\mathcal{U})_{N}\right). 
			\end{aligned}
		\end{equation*}      
        The eigenvalue error estimation reduces to bounding six terms in the equation above.
		
		For the first two terms, by the orthogonality of the truncated projection and \Cref{lem-3}, we have
		\begin{equation}
			\begin{aligned}
				&\left|(\mathcal{A}\bm{P} \nabla \mathcal{U}_{j},\bm{P}\nabla \mathcal{U}_{j})-(\mathcal{A}\bm{P} \nabla \mathcal{P}_{N}\mathcal{U}_{j},\bm{P}\nabla \mathcal{P}_{N}\mathcal{U}_{j})_{N}\right| \\
				\leq& C(\| \bm{P}\nabla \mathcal{U}_{j}\|^{2}-\|  \bm{P}\mathcal{P}_{N} \nabla \mathcal{U}_{j}\|^{2}) \\
				\leq& C\|\nabla \mathcal{U}_{j}-\mathcal{P}_{N} \nabla \mathcal{U}_{j}\|^{2} \\
				\leq& C(1+N^{2})^{-(s-1)}\|\nabla \mathcal{U}_{j}\|^{2}_{s-1}\\
				\leq& C(1+N^{2})^{-(s-1)}\| \mathcal{U}_{j}\|^{2}_{s}.\nonumber
			\end{aligned}
		\end{equation}
		In addition,
		\begin{equation}
			\begin{aligned}
				&\left|a_{p}(\mathcal{U}_{j,N}-\mathcal{P}_{N}\mathcal{U}_{j},\mathcal{U}_{j,N}-\mathcal{P}_{N}\mathcal{U}_{j})\right|+\left|\gamma_{j,N}(\mathcal{U}_{j,N}-\mathcal{P}_{N}\mathcal{U}_{j},\mathcal{U}_{j,N}-\mathcal{P}_{N}\mathcal{U}_{j})_{N}\right|\\
				\leq &\|\mathcal{U}_{j,N}-\mathcal{P}_{N}\mathcal{U}_{j}\|_{1}^{2}\\
				\leq &\|\mathcal{U}_{j,N}-\mathcal{U}_{j}\|_{1}^{2}+\|\mathcal{U}_{j}-\mathcal{P}_{N}\mathcal{U}_{j}\|_{1}^{2} \\
				\leq &C(1+N^{2})^{-(s-1)}.\nonumber
			\end{aligned}
		\end{equation}
		Similarly, a comparable estimate can be obtained for the last two terms. 

        Therefore, we have
		\begin{equation}
			\begin{aligned}
				\left| \gamma - \gamma_{j,N} \right| 
				\leq&C(1+N^{2})^{-(s-1)}+\|\mathcal{U}_{j,N}-\mathcal{U}_{j}\|^{2}_{1}+\|\mathcal{U}_{j}-\mathcal{P}_{N}\mathcal{U}_{j}\|_{1}^{2} \\
				\leq&C(1+N^{2})^{-(s-1)},\nonumber
			\end{aligned}
		\end{equation}
which completes the proof.
	\end{proof}

Define the orthogonal projection operator $\Pi_{N}:V \to V^{N}$ by the variational relation 
\begin{equation}\label{eq:pro_ope}
    a(\Pi_{N}\mathcal{U}-\mathcal{U},\mathcal{V}) = 0,\qquad \forall\,\mathcal{V} \in V^{N}.
\end{equation}
This induces the discrete operator $K_{N} = \Pi_{N}K|_{V^{N}}$. Setting $T_{N} = \Pi_{N}K\Pi_{N}:V \to V$, we note that, except for zero, $T_{N}$ has the same spectrum and corresponding invariant subspaces as $K_{N}$. The spectral projection of $T_{N}$ relative to $\gamma_{1,N}, \cdots, \gamma_{m,N}$ is
\begin{equation}\label{eq:spe_pro_F}
    F_{\gamma,N}=\frac{1}{2 \pi i}\int_{\Gamma_{\mu}}R_{z}(T_{N}){\rm d}z.
\end{equation}
Now consider adjoint operators. The adjoint $K^{*}$ possesses an isolated eigenvalue $\bar{\gamma}$ with algebraic multiplicity $m$. The dual projection $\Pi_{N}^{*}$ satisfies
$$a(\mathcal{V},\Pi_{N}^{*}\mathcal{U}-\mathcal{U}) = 0,\qquad \forall\,\mathcal{V} \in V^{N}.$$ 
The spectral projections $E^{*}$ at $\bar{\gamma}$ and $F_{N}^{*}$ at $\bar{\gamma}_{1,N}, \cdots, \bar{\gamma}_{m,N}$ admit the representations
$$E^{*}_{\gamma}=\frac{1}{2 \pi i}\int_{\bar{\Gamma}_{\gamma}}R_{z}(K^{*}){\rm d}z,\quad F^{*}_{\gamma,N}=\frac{1}{2 \pi i}\int_{\bar{\Gamma}_{\gamma}}R_{z}(T^{*}_{N}){\rm d}z,$$
where $\bar{\Gamma}_{\gamma}$ denotes the complex conjugate of the contour $\Gamma_{\gamma}$.

\begin{lemma}
The projection operator $\Pi_{N}$ defined by \eqref{eq:pro_ope} is uniformly bounded with respect to $N$. Moreover,
$$\|\mathcal{U}-\Pi_{N}\mathcal{U}\| \lesssim \delta(\mathcal{U},V^{N}),\qquad \forall\,\mathcal{U} \in V.$$
The adjoint projection $\Pi_{N}^{*}$ satisfies the same bound.
\end{lemma}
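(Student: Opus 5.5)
The plan is the standard Céa-type argument for the Ritz projection. Throughout, $\|\cdot\|$ is read as the energy norm $\|\cdot\|_{1}$ of $V=\mathcal{H}^{1}_{\bm{P}}(\mathbb{T}^{n})$, the norm in which $\delta(\mathcal{U},V^{N})$ is measured. The first step is to record continuity and coercivity of $a(\cdot,\cdot)$ on $V$. Since $\mathcal{A}$ is the parent function of $\alpha$ and $C_0\le \mathcal{A}\le C_1$ pointwise (the bounds transfer from $\alpha$ to $\mathcal{A}$ by continuity of $\mathcal{A}$ and density of $\{\bm{P}^T\bm{x}\}$ in $\mathbb{T}^n$), we get
\begin{equation*}
\min(C_0,1)\|\mathcal{U}\|_1^2\le a(\mathcal{U},\mathcal{U}),\qquad |a(\mathcal{U},\mathcal{V})|\le \max(C_1,1)\|\mathcal{U}\|_1\|\mathcal{V}\|_1 .
\end{equation*}
By \Cref{thm-com}, Lax--Milgram applies on the closed subspace $V^{N}$. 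Hence $\Pi_N$ is well defined, linear, and idempotent on $V^N$.

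Next comes uniform boundedness. Test \eqref{eq:pro_ope} with $\mathcal{V}=\Pi_N\mathcal{U}$ to obtain
\begin{equation*}
\min(C_0,1)\|\Pi_N\mathcal{U}\|_1^2\le a(\Pi_N\mathcal{U},\Pi_N\mathcal{U})=a(\mathcal{U},\Pi_N\mathcal{U})\le \max(C_1,1)\|\mathcal{U}\|_1\|\Pi_N\mathcal{U}\|_1 .
\end{equation*}
Thus $\|\Pi_N\|\le \max(C_1,1)/\min(C_0,1)$, a bound independent of $N$.

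The approximation estimate is Céa's lemma. For arbitrary $\mathcal{V}_N\in V^N$, Galerkin orthogonality gives
\begin{equation*}
a(\mathcal{U}-\Pi_N\mathcal{U},\mathcal{U}-\Pi_N\mathcal{U})=a(\mathcal{U}-\Pi_N\mathcal{U},\mathcal{U}-\mathcal{V}_N),
\end{equation*}
so coercivity and continuity yield $\|\mathcal{U}-\Pi_N\mathcal{U}\|_1\lesssim \|\mathcal{U}-\mathcal{V}_N\|_1$. Taking the infimum over $\mathcal{V}_N$ gives the bound by $\delta(\mathcal{U},V^N)$. Alternatively, one can write $\mathcal{U}-\Pi_N\mathcal{U}=(I-\Pi_N)(\mathcal{U}-\mathcal{V}_N)$ and use the uniform bound on $\Pi_N$ from the previous step.

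For $\Pi_N^*$ the same two estimates go through verbatim. They use only coercivity and continuity of $a$, which are symmetric in the two arguments. In fact $a$ is Hermitian here, since $\mathcal{A}$ is real, so $\Pi_N^*=\Pi_N$.

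There is no serious obstacle. The only points needing care are the transfer of the ellipticity bounds from $\alpha$ to the parent function $\mathcal{A}$, and making sure $\|\cdot\|$ and $\delta$ are both taken in the $V$-norm. If the intended norm in the estimate is instead the $\mathcal{L}^2$ norm, the result follows trivially from $\|\cdot\|\le\|\cdot\|_1$.
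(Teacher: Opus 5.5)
Your proposal is correct and follows essentially the same route as the paper: you test \eqref{eq:pro_ope} with $\mathcal{V}=\Pi_N\mathcal{U}$ to get an $N$-independent bound on $\|\Pi_N\|$, and then use a C\'ea argument (Galerkin orthogonality plus coercivity and continuity of $a$) to obtain $\|\mathcal{U}-\Pi_N\mathcal{U}\|_1\lesssim\delta(\mathcal{U},V^N)$. Your additions fill in details the paper leaves implicit: you transfer the ellipticity bounds from $\alpha$ to the parent function $\mathcal{A}$ via density of $\{\bm{P}^T\bm{x}\}$ in $\mathbb{T}^n$, and you note that $a$ is Hermitian, so $\Pi_N^*=\Pi_N$; your orthogonality step is also cleaner than the paper's use of the test function $\mathcal{U}-\Pi_N\mathcal{U}$, which generally lies outside $V^N$.
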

\begin{proof}
   Fix $\mathcal{U} \in V$. From the definition of $\Pi_{N}$, let $\mathcal{V} = \Pi_{N}\mathcal{U}$, then $$a(\Pi_{N}\mathcal{U},\Pi_{N}\mathcal{U}) = a(\mathcal{U},\Pi_{N}\mathcal{U}).$$ 
   Then 
$$C_{0}\|\Pi_{N}\mathcal{U}\|^{2} \leq |a(\Pi_{N}\mathcal{U},\Pi_{N}\mathcal{U})| = |a(\mathcal{U},\Pi_{N}\mathcal{U})| \leq C_{1}\|\mathcal{U}\|\|\Pi_{N}\mathcal{U}\|.$$
Similarly, let $\mathcal{V} = \mathcal{U}-\Pi_{N}\mathcal{U}$, we have
$$a(\mathcal{U}-\Pi_{N}\mathcal{U},\mathcal{U}-\Pi_{N}\mathcal{U})^{\frac{1}{2}} = \inf_{\mathcal{U}_{N} \in V^{N}}a(\mathcal{U}-\mathcal{U}_{N},\mathcal{U}-\mathcal{U}_{N})^{\frac{1}{2}},$$
and
\begin{equation*}
C_{0}\|\mathcal{U}-\Pi_{N}\mathcal{U}\|^{2} \leq C_{1}\|\mathcal{U}-\mathcal{U}_{N}\|\|\mathcal{U}-\Pi_{N}\mathcal{U}\|,~ \forall\,\mathcal{U}_{N} \in V^{N}.
\end{equation*}
\end{proof}

\begin{theorem}[\cite{descloux19782}, Theorem 1]
  Assume that 
  $E_{\gamma}(V)$ is the eigenspace with respect to the eigenvalue $\gamma$ of \eqref{eqn-5}, and the corresponding eigenspace $F_{\gamma,N}(V)$ is defined by \eqref{eq:spe_pro_F}, 
  there exists
$$\hat{\delta}(F_{\gamma,N}(V),E_{\gamma}(V)) \leq \gamma_N,~ \hat{\delta}(F^{*}_{\gamma,N}(V),E^{*}_{\gamma}(V)) \leq \gamma_N^*,$$
where 
$$\gamma_{N} = \delta(E_{\gamma}(V),V^{N}),~ \gamma_{N}^{*} = \delta(E_{\gamma}^{*}(V),V^{N}).$$
\end{theorem}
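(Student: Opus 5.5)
The bound actually holds with a multiplicative constant, $\hat{\delta}(F_{\gamma,N}(V),E_{\gamma}(V)) \lesssim \gamma_N$, as in Descloux--Nassif--Rappaz; I will prove that version. The plan is to compare the two spectral projections through their resolvents on the contour $\Gamma_\gamma$. Throughout, I work on $V=\mathcal{H}^1_{\bm P}(\mathbb{T}^n)$ with the operator $T_N=\Pi_N K\Pi_N$, which is defined on all of $V$.

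\textbf{Step 1: $T_N\to K$ on the eigenspace.} For $\mathcal{U}\in E_\gamma(V)$ we have $K\mathcal{U}=\gamma\mathcal{U}$, so
$$(K-T_N)\mathcal{U}=\gamma(\mathcal{U}-\Pi_N\mathcal{U})+\Pi_N K(\mathcal{U}-\Pi_N\mathcal{U}).$$
By the preceding lemma, $\Pi_N$ is uniformly bounded and $\|\mathcal{U}-\Pi_N\mathcal{U}\|\lesssim\delta(\mathcal{U},V^N)$. Hence
$$\|(K-T_N)|_{E_\gamma(V)}\|\lesssim\gamma_N.$$

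\textbf{Step 2: uniform resolvent bound.} I need $\sup_{z\in\Gamma_\gamma}\|R_z(T_N)\|\le C$ for $N$ large. Corollary \ref{cor-06231} gives this on $V^N$, and on $(I-\Pi_N)V$ we have $T_N=0$, so $R_z(T_N)=z^{-1}$ there. Since $0$ lies outside $\mathcal{B}_\gamma$, $|z|^{-1}$ is bounded on $\Gamma_\gamma$. The operator $T_N$ respects $V=V^N\oplus(I-\Pi_N)V$ with uniformly bounded projections, so I would glue the two bounds to get the full estimate.

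\textbf{Step 3: $\delta(E_\gamma(V),F_{\gamma,N}(V))\lesssim\gamma_N$.} For $\mathcal{U}\in E_\gamma(V)$ with $\|\mathcal{U}\|=1$, use the resolvent identity:
$$(E_\gamma-F_{\gamma,N})\mathcal{U}=\frac{1}{2\pi i}\int_{\Gamma_\gamma}R_z(T_N)(K-T_N)R_z(K)\mathcal{U}\,dz.$$
Here $R_z(K)\mathcal{U}=(z-\gamma)^{-1}\mathcal{U}$ stays in $E_\gamma(V)$, so Steps 1 and 2 give
$$\|\mathcal{U}-F_{\gamma,N}\mathcal{U}\|\lesssim\gamma_N.$$
Since $F_{\gamma,N}\mathcal{U}\in F_{\gamma,N}(V)$, this is the claimed bound in one direction.

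\textbf{Step 4: the reverse direction, $\delta(F_{\gamma,N}(V),E_\gamma(V))$ (main obstacle).} The difficulty is that $K-T_N$ is only small on $E_\gamma(V)$, not on all of $V$. I would first use Theorem \ref{thm:function} to get $\dim F_{\gamma,N}(V)=m$ for large $N$. Then Step 3, applied to a basis of $E_\gamma(V)$, shows that $F_{\gamma,N}|_{E_\gamma(V)}$ is injective with inverse bounded uniformly in $N$. Its image is $m$-dimensional, so it is all of $F_{\gamma,N}(V)$. Therefore any $\mathcal{W}\in F_{\gamma,N}(V)$ can be written as $\mathcal{W}=F_{\gamma,N}\mathcal{U}$ with $\|\mathcal{U}\|\lesssim\|\mathcal{W}\|$. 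Then
$$\delta(\mathcal{W},E_\gamma(V))\le\|F_{\gamma,N}\mathcal{U}-\mathcal{U}\|\lesssim\gamma_N\|\mathcal{W}\|.$$
Combining with Step 3 gives $\hat\delta(F_{\gamma,N}(V),E_\gamma(V))\lesssim\gamma_N$.

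\textbf{Adjoint case.} The statement for $F^*_{\gamma,N}$ and $E^*_\gamma$ follows by the same argument with $K^*$, $\Pi_N^*$, $T_N^*$ and $\bar\Gamma_\gamma$. This uses that $\Pi_N^*$ satisfies the same bound as $\Pi_N$, which the preceding lemma provides.
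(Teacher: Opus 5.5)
Your proposal is correct and follows the proof of the cited Descloux--Nassif--Rappaz theorem, which the paper invokes without proof: you bound $K-T_N$ on $E_\gamma(V)$ by $\gamma_N$, pass this through the resolvent identity using the uniform bound for $R_z(T_N)=R_z(K_N)\Pi_N+z^{-1}(I-\Pi_N)$ on the compact contour $\Gamma_\gamma$, and reverse the direction via $\dim F_{\gamma,N}(V)=m$. You are also right to insist on a multiplicative constant: $F_{\gamma,N}(V)\subset V^N$ gives $\delta(E_\gamma(V),F_{\gamma,N}(V))\ge\gamma_N$, so the constant-free bound as printed would force $\delta(E_\gamma(V),F_{\gamma,N}(V))=\gamma_N$, which fails in general.
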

\noindent Since $K$  is self-adjoint, all expressions above can be rigorously bounded via error estimates by \Cref{lem-3}. 
Then we have $$\hat{\delta}(F_{\gamma,N}(V),E_{\gamma}(V)) \lesssim (1+N^{2})^{\frac{1-s}{2}},$$
$$\hat{\delta}(F^{*}_{\gamma,N}(V),E^{*}_{\gamma}(V)) \lesssim (1+N^{2})^{\frac{1-s}{2}}.$$

\section{Numerical experiments}
\label{sec:exp}
In this section, we verify the accuracy and efficiency of our approach through numerical experiments, including a one-dimensional photonic quasicrystal and two- and three-dimensional QEOs.
All experiments are implemented in MATLAB R2022a on a laptop equipped with an Intel Core processor (2.60 GHz) and 8 GB of RAM.

To solve the discretized eigenvalue system \eqref{eq:lin_sys}, we use the eigs solver in MATLAB. 
As the discretization grid size $N$ increases, the linear system becomes increasingly ill-conditioned due to  {the stiffness of the discretized operators}, making it imperative to design an effective preconditioner. We construct a diagonal preconditioner by solving the following matrix minimization problem
\begin{equation}
   \begin{aligned}
     \bm{M} = \underset{\bm{D}\in \mathcal{D}}{\rm argmin}\|\bm{D}\bm{Q}-\bm{I}_{D}\|_{F},
   \end{aligned}
\end{equation}
where $\mathcal{D}$ denotes the set of all $N$-order diagonal matrices. 
As established in \cite{jiang2024}, the solution of the above optimization problem is
$$\bm{M} = {\rm diag}(q_{11}/\|\bm{e}_{1}^{T}\bm{Q}\|_{2}^{2},\cdots,q_{DD}/\|\bm{e}_{D}^{T}\bm{Q}\|_{2}^{2}).$$
To validate the diagonal preconditioner $\bm{M}$, Table \ref{tab.con} reports results for $d=1$, $n=2$ (see Example 4.1). The condition number drops from $O(10^{3})$ to single digits and remains stable as $N$ increases. 
In 2D and 3D tests, the condition numbers show a similar decrease, from $O(10^{4})$ and $O(10^{5})$ to single-digit values.

	\begin{table}[htbp]
        \footnotesize
        \centering
        \caption{Condition numbers of matrix $\bm{Q}$ and its corresponding preconditioned matrix $\bm{MQ}$.}
        \label{tab.con}
		\begin{tabular}{||c|c|c|c|c||} 
			\hline
			 $N$ & 8 & 16 & 32 & 64\\
			\hline
			\hline
			${\rm cond}(\bm{Q})$ & 1.8848e+03 & 1.2779e+05 & 5.1708e+05 & 3.8355e+07\\
			\hline
			${\rm cond}(\bm{MQ})$ & 4.16 & 4.40 & 4.76 & 4.77\\
			\hline
		\end{tabular}
\end{table}

In what follows, we present convergence results for eigenpairs of QEOs computed by our method, focusing on  {the ground and first excited states} which are first identified on the coarse grid. For each refined grid, we track the corresponding eigenpairs and quantify their errors and convergence rates to evaluate numerical performance. While the method exhibits consistent convergence behavior across the spectrum, we limit the presentation to  {these two states} for practical reasons.

\begin{example}
    We begin with a one-dimensional photonic quasicrystal.
	Here, the QEO problem can be derived from Maxwell's equations for a scalar magnetic-field component $ H(x) $ \cite{chan1998photonic}
    $$-\frac{d}{dx}\left(\frac{1}{\varepsilon(x)}\frac{d}{dx}H(x)\right) = k^{2}H(x),$$
    where $ k $ is the free-space wavenumber and $ \varepsilon(x) $ denotes the dielectric permittivity. The inverse of $ \varepsilon(x) $ is modeled as a quasiperiodic potential generated by two incommensurate photonic lattices
    $$\alpha_1(x)=\frac{1}{\varepsilon(x)} = \frac{1}{2}\left({\rm cos}(x)+{\rm cos}\left(\beta x\right)\right)+1,$$
    with $ \beta \in \mathbb{R} \backslash \mathbb{Q} $. Here, we set $ \beta = (\sqrt{5}-1)/2 $.
\end{example}

    The projection matrix is $\bm{P} = 2 \pi (1,(\sqrt{5}-1)/2)$. $\alpha_{1}(x)$ can be embedded into the 2D parent function $A_{1}(\bm{y}) = \frac{1}{2}({\rm cos}(y_{1})+{\rm cos}(y_{2}))+1, \bm{y} = (y_{1},y_{2})^{T}.$
	We take the numerical solution at $N=128$ as the reference.  {We first identify the ground and first excited states on the coarsest mesh ($N=8$). Then, on finer meshes, we track the corresponding eigenvalues by matching their eigenfunctions.} 
	\Cref{Fig.ex1} illustrates that the errors in both the eigenvalues and eigenfunctions decrease monotonically as $N$ increases from 8 to 32.

\begin{figure}[htbp] 
	\centering 
	\includegraphics[width=0.65\textwidth]{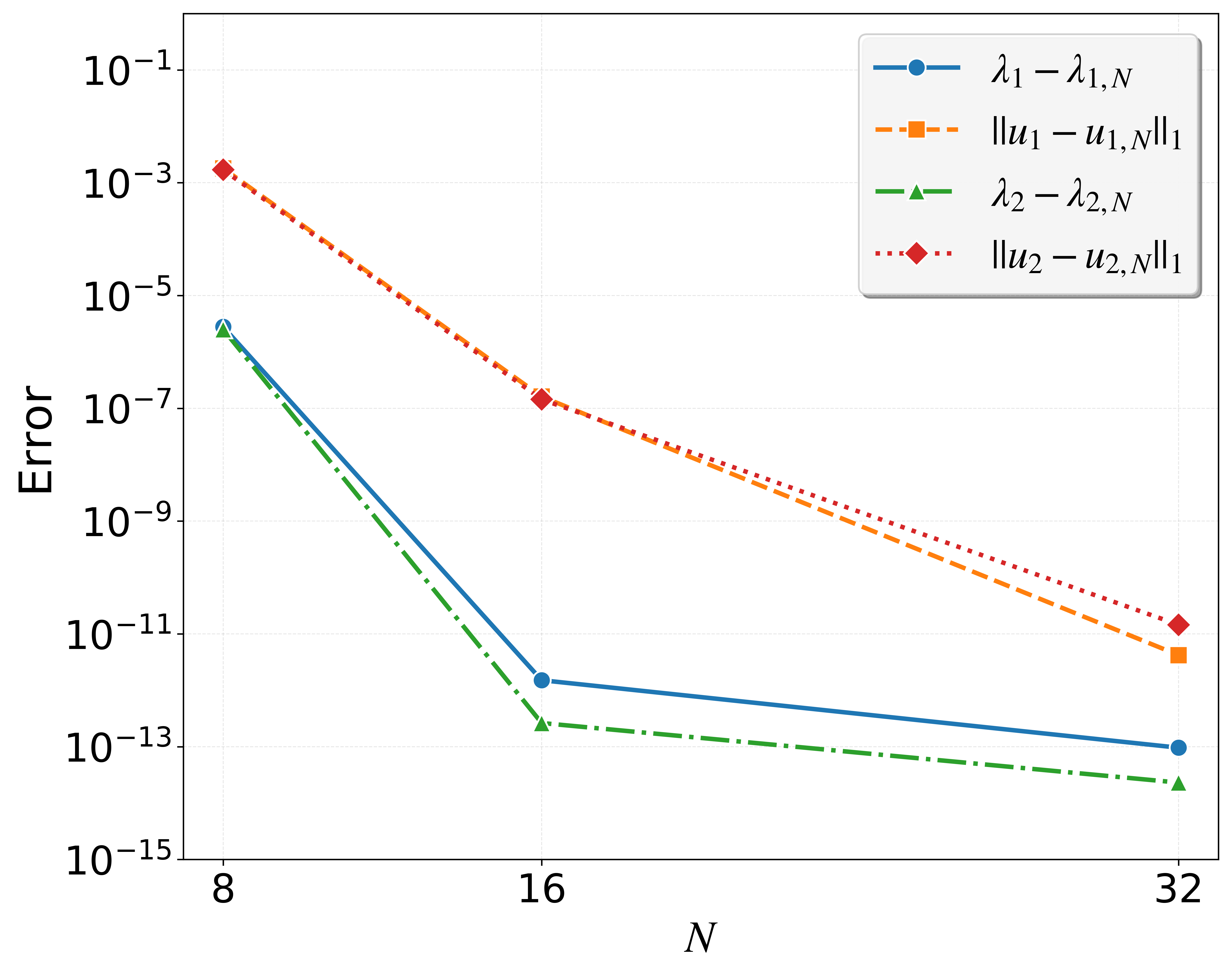} 
	\caption{Errors of the first two eigenpairs when using PM to solve Example 4.1.} 
	\label{Fig.ex1} 
\end{figure}

 \begin{figure*}[!htbp] 
 	\centering 
 	\includegraphics[width=1.0\textwidth]{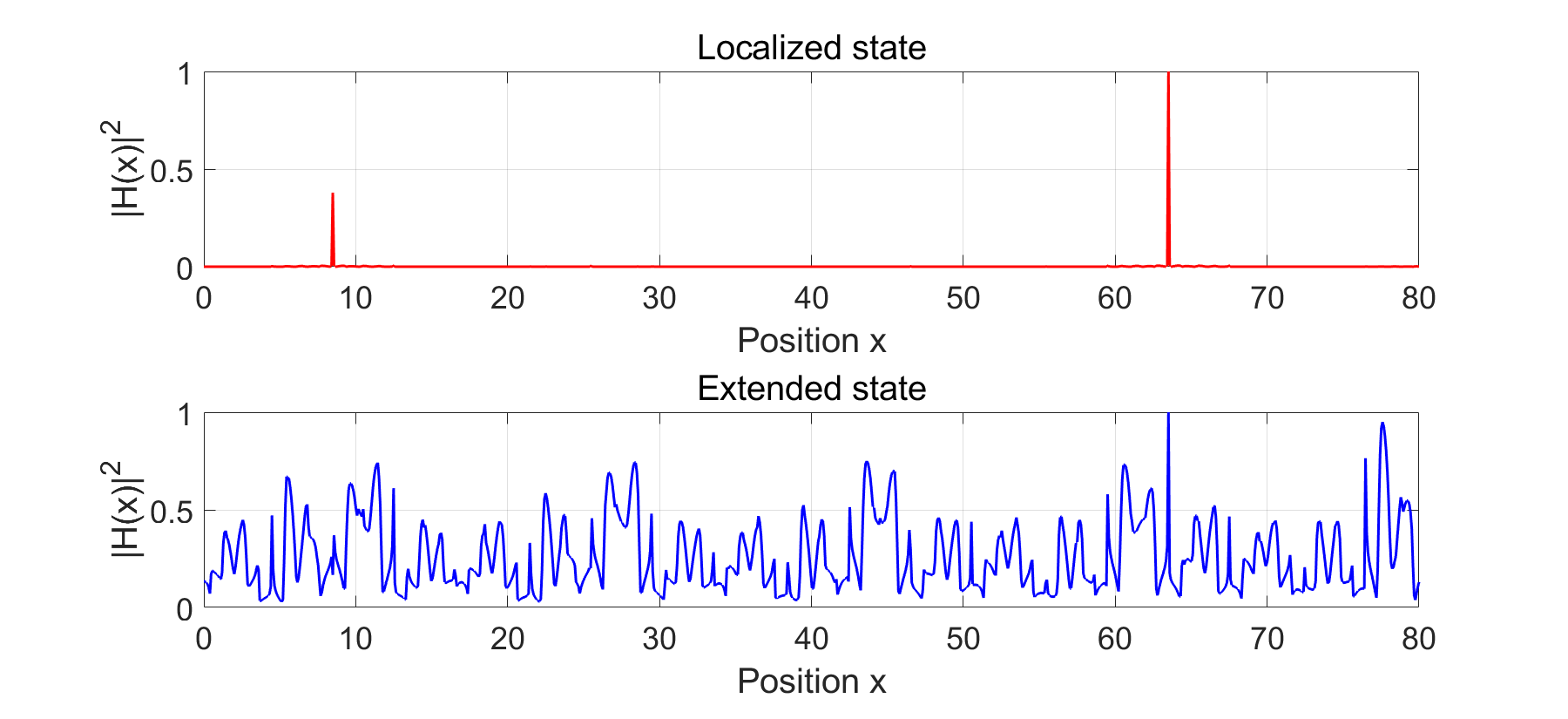} 
 	\caption{Localized and extended states of Example 4.1 solved by PM.} 
 	\label{Fig.sta} 
 \end{figure*}

Beyond convergence accuracy, \Cref{Fig.sta} shows two representative eigenstates computed by our method. The top subfigure illustrates a localized state with sharp, concentrated peaks (a double-peak pattern typical of localization~\cite{ge2019exponential}). In contrast, the bottom subfigure displays an extended state with oscillations distributed across the domain. These two eigenstates reflect distinct propagation behaviors in photonic quasicrystals induced by quasiperiodicity, consistent with experimental observations~\cite{chan1998photonic,vardeny2013optics}. 

$\textbf{Comparison with PAM.}$ 
To further evaluate the proposed method, we solve Example 4.1 with PAM and compare the results. PAM approximates the original coefficient $\alpha_1(x)$ by
$$\alpha_{1L}(x) = {\rm cos}(x)+{\rm cos}\left(\frac{\lfloor(\sqrt{5}-1)L\rfloor}{2L} x\right)+1,~ L \in \mathbb{Z}$$ 
where $\lfloor \cdot \rfloor$ denotes the floor function. Approximating the irrational $\beta$ by a rational number makes the potential periodic, so the problem reduces to a periodic system with period $2\pi L$ (with $L$ as a tunable parameter). This rational approximation introduces a Diophantine error, defined as
\begin{equation*}
     e_{d}:= \left|\beta- \frac{\lfloor(\sqrt{5}-1)L\rfloor}{2L}\right|.
\end{equation*} 
\Cref{Fig.main1} shows the Diophantine error $e_d$ versus the integer parameter $L$ for $0 \leq L \leq 500$. The error monotonically decreases below $10^{-2}$ only when $L$ is chosen from the best rational approximation sequence of $\beta$ (specifically, $L=17, 72, 144, 233, 305, 377$). 

\begin{figure}[htbp] 
	\centering 
	\includegraphics[width=0.75\textwidth]{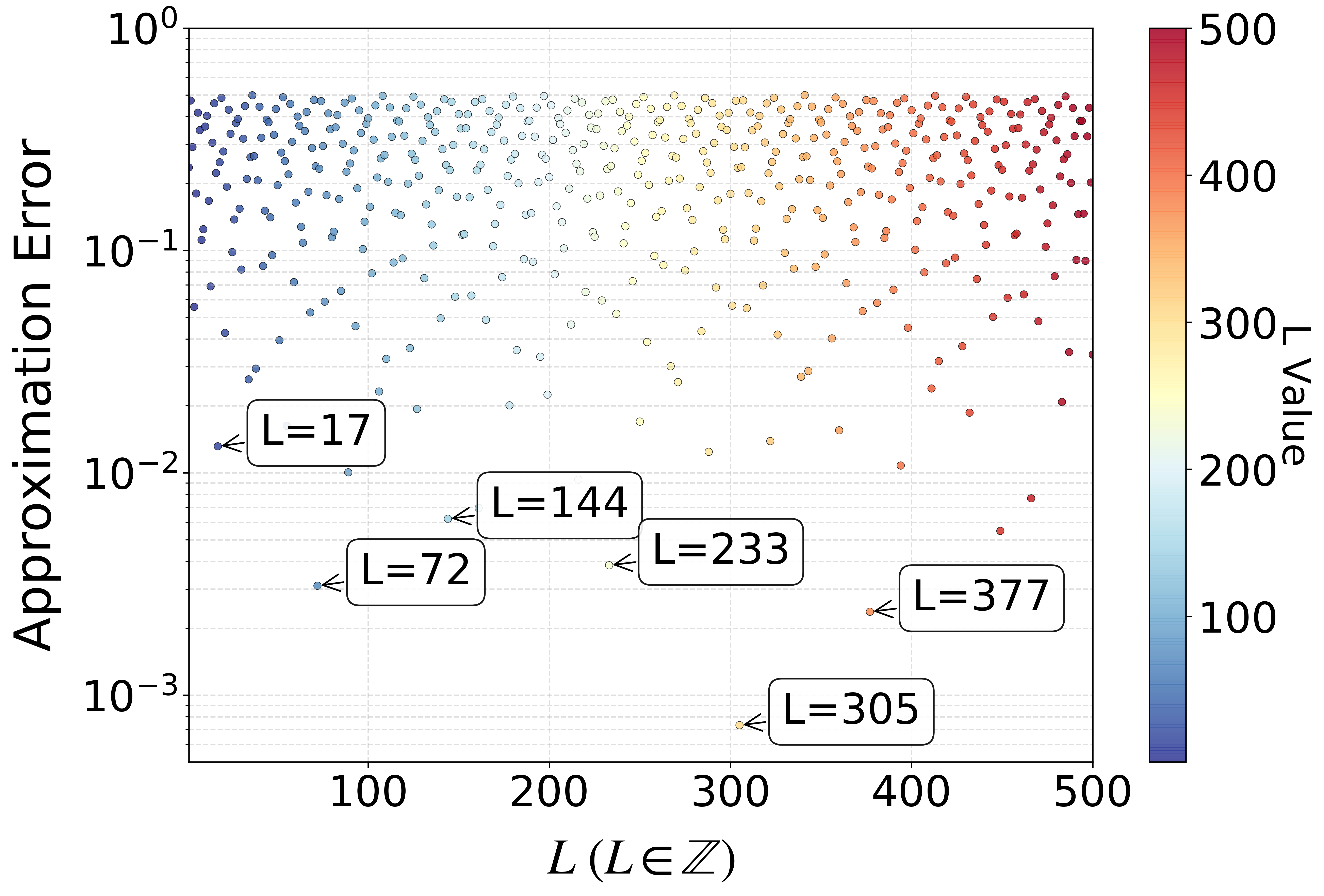} 
	\caption{Diophantine approximation error $e_{d}$ when using PAM to solve Example 4.1.} 
	\label{Fig.main1} 
\end{figure}

	\begin{table}[htbp]
        \footnotesize
        \caption{Errors of the first two eigenvalues when using PAM to solve Example 4.1.}
        \label{tab1}
		\begin{center}
			\begin{tabular}{||c|c|c|c|c|c|c||} 
				\hline
				$L$ & 17 & 72 & 144 & 233 & 305 & 377\\
				\hline
				\hline
				$e_{d}$ & 1.3156e-02 & 3.1056e-03 & 6.2112e-02 & 3.8358e-03 & 7.3314e-04 & 2.3752e-03\\
				\hline
				$\gamma_{1}-\gamma_{1,L}$ & 8.1600e-02 & 4.4000e-03 & 4.4000e-02 & 1.7000e-03 & 2.4649e-04 & 6.4513e-03\\
				\hline
				$\gamma_{2}-\gamma_{2,L}$ & 8.1600e-02 & 1.7800e-02 & 1.7800e-02 & 6.8000e-03 & 9.8976e-04 & 2.6000e-03\\
				\hline
			\end{tabular}
		\end{center}
	\end{table}

    \begin{figure}[htbp]
	\centering 
	\includegraphics[width=0.8\textwidth]{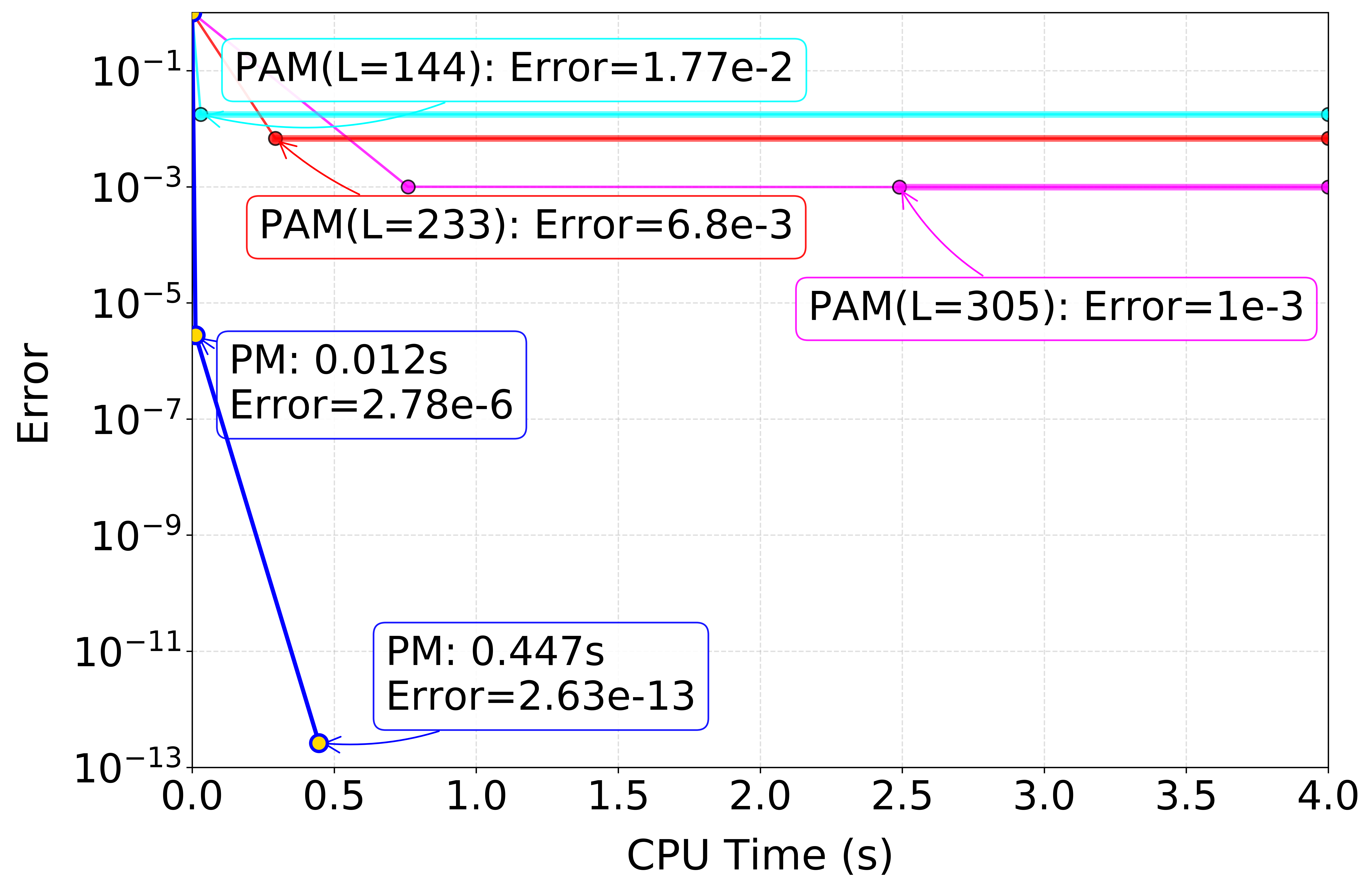} 
	\caption{Comparison of PM and PAM when solving Example 4.1.} 
	\label{Fig.PMPAM} 
    \end{figure} 

For PAM simulations, we use these optimal $L$ values and set $D = L \times N$ discrete points (with $N=16$ fixed) to ensure sufficient accuracy. 
 {We take the numerical solution computed by PM at $N=128$ as the reference.} 
\Cref{tab1} lists the numerical errors of tracked eigenvalues; the results confirm that these errors are primarily governed by the Diophantine error $e_d$~\cite{jiang2023}. This limitation makes PAM impractical in applications.

We further compare the efficiency of PM and PAM. \Cref{Fig.PMPAM} reports the relative error of the first eigenvalue and the corresponding CPU time. For quasiperiodic problems, PM is both more accurate and more efficient, achieving higher accuracy with substantially less CPU time.

\begin{example} 
Consider 2D QEO (\ref{eqn-4}) with the coefficient
	$$\alpha_{2}(\bm{x}) = {\rm cos}(\beta x_{1})+{\rm cos}(\beta x_{2})+{\rm cos}\left(\beta {\rm cos}(\theta)x_{1}+\beta {\rm sin}(\theta)x_{2}\right)+6.$$
	where $\bm{x} = (x_{1},x_{2})^{T},~ \beta \in \mathbb{R},~ \theta \in (0,2\pi)$. The parameter $\theta$ indicates the relative angle of rotation between the lattices.
\end{example}
	
	The projection matrix of $\alpha_{2}(\bm{x})$ is
	\begin{equation}
		\bm{P} = \beta \begin{pmatrix} 1 & 0 & {\rm cos}(\theta) \\ 0 & 1 & {\rm sin}(\theta) \end{pmatrix}, \nonumber
	\end{equation}
	and the corresponding parent function is
	$$A_{2}(\bm{y}) = {\rm cos}(y_{1})+{\rm cos}(y_{2})+{\rm cos}(y_{3})+6,~ \bm{y} = (y_{1},y_{2},y_{3})^{T}.$$

    To assess accuracy and convergence, we take the numerical solution at $N=64$ as the reference. \Cref{Fig.ex2} shows the errors of the first two eigenpairs as $N$ increases from 8 to 32. The blue and green curves represent the first and second eigenvalue errors, respectively, exhibiting exponential convergence and reaching the $10^{-15}$ level at $N=32$. The orange and red dashed curves show the corresponding eigenfunction errors, which also decay monotonically with $N$. These results demonstrate high-precision convergence for both eigenvalues and eigenfunctions, validating the accuracy and efficiency of the method for this 2D QEO problem.

\begin{figure}[htbp] 
	\centering 
	\includegraphics[width=0.65\textwidth]{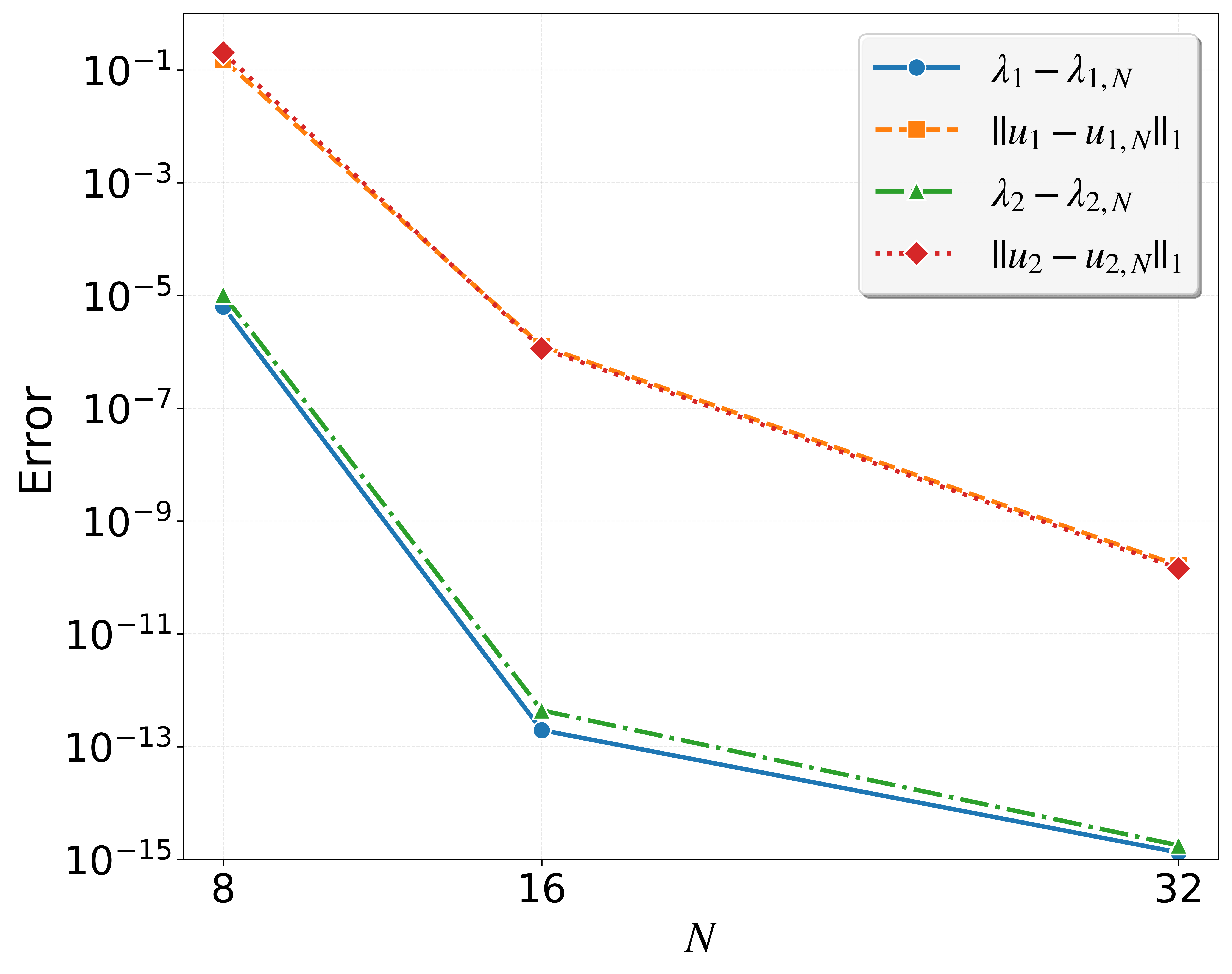} 
	\caption{Errors of the first two eigenpairs when using PM to solve Example 4.2 with $\beta = 0.5\pi,~\theta = 0.2 \pi$.} 
	\label{Fig.ex2} 
\end{figure}

\begin{example}
 Consider 3D QEO (\ref{eqn-4}) with the coefficient
	$$\alpha_{3}(\bm{x}) = {\rm cos}(2 \pi x_{1})+{\rm cos}(2 \pi x_{2})+{\rm cos}(2 \pi x_{3})+{\rm cos}(2 \pi \beta x_{3})+8.$$
where $\beta = \sqrt{5}-1$. 
\end{example}

The projection matrix is
	\begin{equation}
		\bm{P} = 2 \pi \begin{pmatrix} 1 & 0 & 0 & 0 \\ 0 & 1 & 0 & 0 \\0 & 0 & 1 & \beta \end{pmatrix}, \nonumber
	\end{equation}
and the corresponding parent function is
	$$A_{3}(\bm{y}) = {\rm cos}(y_{1})+{\rm cos}(y_{2})+{\rm cos}(y_{3})+{\rm cos}(y_{4})+8,~ \bm{y} = (y_{1},y_{2},y_{3},y_{4})^{T}.$$

	\begin{table}[htbp]
        \footnotesize
        \caption{Errors and orders of the first three eigenvalues when using PM to solve Example 4.3.}
        \label{tab2}
		\begin{center}
			\begin{tabular}{||c|c|c|c|c|c||} 
				\hline
				$N$ & 6 & 8 & 10 & 12 & 14\\
				\hline
				\hline
				$\gamma_{1}-\gamma_{1,N}$ & 4.4322e-05 & 4.2692e-07 & 3.8024e-09 & 3.2880e-11 & 2.7711e-13\\
				\hline
				$\gamma_{2}-\gamma_{2,N}$ & 5.1589e-05 & 4.6952e-07 & 4.0744e-09 & 3.4962e-11 & 2.9132e-13\\
				\hline
				$\gamma_{3}-\gamma_{3,N}$ & 0.0499 & 1.1096e-04 & 7.6371e-07 & 1.0143e-08 & 1.8659e-13\\
				\hline
			\end{tabular}
		\end{center}
	\end{table}

    As a reference, we use the numerical solution at $N=16$. \Cref{tab2} reports the absolute errors of the first three eigenvalues for $N=6,8,10,12,14$. The eigenvalue errors decrease exponentially as $N$ grows. These results corroborate the high-precision convergence of the method for this 3D QEO case.

\section{Conclusion and outlook}
\label{sec:con}
	This paper develops a PM-based algorithm for computing QEO eigenpairs by embedding quasiperiodic operators into a higher-dimensional periodic torus and solving the equivalent problem with Fourier discretization. To address the theoretical challenge posed by the non-compactness of QEOs in Besicovitch quasiperiodic function spaces, we construct a directional-derivative Hilbert space along irrational manifolds of the high-dimensional torus.
	By integrating spectral theory for non-compact operators into the Babu\v{s}ka--Osborn eigenproblem framework, we establish rigorous convergence analysis and error estimates for both eigenvalues and eigenfunctions.
    Numerical experiments validate the accuracy and efficiency of the method, including one-dimensional photonic quasicrystals and two- and three-dimensional QEOs.
	
	Future work will proceed in two directions. First, extending the framework to non-self-adjoint operators will broaden its applicability to a wider class of quasiperiodic systems. Second, because the fractal structure of continuous spectra in QEOs is closely related to exotic phenomena in photonic quasicrystals, we will couple the present numerical framework with physical models to guide the design of materials with tailored spectral properties.

\bibliographystyle{amsplain}
\bibliography{references}

\end{document}